\numberwithin{equation}{section}
\numberwithin{figure}{section}
\theoremstyle{plain}
\newtheorem{thm}{\protect\theoremname}
\theoremstyle{definition}
\newtheorem{defn}[thm]{\protect\definitionname}
\theoremstyle{plain}
\newtheorem{cor}[thm]{\protect\corollaryname}
\theoremstyle{plain}
\newtheorem{lem}[thm]{\protect\lemmaname}
\providecommand{\corollaryname}{Corollary}
  \providecommand{\definitionname}{Definition}
  \providecommand{\lemmaname}{Lemma}
\providecommand{\theoremname}{Theorem}
\providecommand{\corollaryname}{Corollary}
\providecommand{\definitionname}{Definition}
\providecommand{\lemmaname}{Lemma}
\providecommand{\theoremname}{Theorem}
\begin{document}
\title{Absolute stability and absolute hyperbolicity in systems with discrete
time-delays}
\author{Serhiy Yanchuk, Matthias Wolfrum, Tiago Pereira, Dmitry Turaev}
\begin{abstract}
An equilibrium of a delay differential equation (DDE) is absolutely
stable, if it is locally asymptotically stable for all delays. We
present criteria for absolute stability of DDEs with discrete time-delays.
In the case of a single delay, the absolute stability is shown to
be equivalent to asymptotic stability for sufficiently large delays.
Similarly, for multiple delays, the absolute stability is equivalent
to asymptotic stability for hierarchically large delays. Additionally,
we give necessary and sufficient conditions for a linear DDE to be
hyperbolic for all delays. The latter conditions are crucial for determining
whether a system can have stabilizing or destabilizing bifurcations
by varying time delays. 
\end{abstract}

\maketitle
\tableofcontents{}

\section{Introduction}

Delay differential equations (DDE) play an important role in modeling
various processes in nature and technology. Examples are optoelectronic
systems \cite{Erneux2009,Weicker2012,Soriano2013,Glitzky2014,Yanchuk2014,Erneux2017,Yanchuk2017},
population and infections disease modeling \cite{Kuang1993,Smith1993,Hartung2006a,Diekmann2007,Erneux2009,Smith2010,Muller2015,Agaba2017,Young2019,Ruschel2019},
neuroscience \cite{Wu2001,Izhikevich2006,Ko2007,Popovych2011,Yanchuk2019},
machine learning \cite{Appeltant2011,Larger2012,Larger2017,Stelzer2019,Goldmann2020},
mechanics \cite{Tsao1993,Hartung2006a,Insperger2006,InspergerStepan2011,Otto2013,Otto2015},
and other fields. Driven by industrial developments and automatic
control devices, DDE theory was rapidly developing since the middle
of the 20th century \cite{Pontryagin1942,Myshkis1949,Bellman1954}.
Several monographs have been published, see, for example, \cite{Bellman1963a,Stepan1989,Hale1993,Diekmann1995a,Erneux2009,Atay2010a,GuoWu2013}.

It is a basic fact that the equilibria of a DDE do not change under
variations of the delay time. In general, their stability properties
may change under such variations. Indeed, in many cases increasing
delay is known to induce additional instabilities. However, there
is also the case, called \emph{absolute stability}, where the stability
of an equilibrium remains unchanged for all possible non-negative
delay times. Considering linear DDEs with discrete delays 
\begin{equation}
\frac{dx}{dt}(t)=A_{0}x(t)+\sum_{k=1}^{m}A_{k}x(t-\tau_{k}),\label{eq:lin-1}
\end{equation}
with $x\in\mathbb{R}^{n}$, $\tau_{k}\ge0$, $A_{0},A_{k}\in\mathbb{C}^{n\times n}$,
$k=1,\dots,m$. System (\ref{eq:lin-1}) is the linearization at an
equilibrium of autonomous DDEs. The stability of DDE (\ref{eq:lin-1})
is described by the roots of the characteristic quasipolynomial 
\begin{equation}
Q(\lambda)=P(\lambda,e^{-\lambda\tau_{1}},\dots,e^{-\lambda\tau_{k}})=\det\left[\lambda\cdot\mathrm{I}-A_{0}-\sum_{k=1}^{m}A_{k}e^{-\lambda\tau_{k}}\right]=0,\label{eq:cheq-1}
\end{equation}
where $I$ is the identity matrix.

We present a new criterion for the absolute stability of Eq.~(\ref{eq:lin-1}),
i.e., a necessary and sufficient condition on the matrices $A_{k}$
such that all roots $\lambda$ of the quasipolynomial (\ref{eq:cheq-1})
have negative real parts for arbitrary non-negative delays $\tau_{k}$.
Our Theorems \ref{thm:AS-general} and \ref{thm:AS-general-1} generalize
known results \cite{Pontryagin1942,Yuang-Shun,Bellman1963a,Elsgolz1971a,Cooke1986,Brauer1987,ShiguiRuan2003,Hayes1950,Noonburg1969,Boese1995,Baptistini1997,Boese1998,Kolmanovskii1999,Wang1999,Wu2001a,Smith2010,Li2016,An2019}
and have three main advantages: 
\begin{itemize}
\item simple to check (conditions on compact sets);
\item they give necessary and sufficient conditions; 
\item geometric interpretation using certain limiting spectral sets. 
\end{itemize}
Moreover, the absolute stability appears to be equivalent to the asymptotic
stability for hierarchically large delays $1\ll\tau_{1}\ll\cdots\ll\tau_{m}$,
which, for the case $m=1$, is the asymptotic stability for a single
large delay.

Additionally, we provide a criterion for system (\ref{eq:lin-1})
to be hyperbolic for all time delays, i.e., the condition for the
absence of the roots of the characteristic polynomial $\lambda$ with
zero real parts. In particular, this means that under the obtained
conditions one cannot change the stability of the equilibrium in (\ref{eq:lin-1});
it remains either asymptotically stable or unstable for all delays.

Let us first give a brief overview of the known results on the absolute
stability. One of the first conditions is due to Pontryagin \cite{Pontryagin1942}.
This criterium involves the verification of certain properties of
the characteristic equation evaluated along the whole imaginary axis
$\Delta(iy)$ as well as some additional implicit conditions. The
Potryagin conditions have been used in many applications \cite{Bellman1963a,Hale1977}.

In \cite{Brauer1987}, Brauer gave sufficient conditions for the absolute
stability of the characteristic equation 
\begin{equation}
F(\lambda)+G(\lambda)e^{-\lambda\tau}=0,\label{eq:brauercheq}
\end{equation}
which is a polynomial of the first order in $e^{-\lambda\tau}$. Comparing
it with (\ref{eq:cheq-1}), this corresponds to a single delay and
a rank one matrix $A_{1}$. On the other hand, equation (\ref{eq:brauercheq})
can appear in some cases with distributed delays, which we do not
consider here. The Brauer's conditions have been applied in, e.g.
\cite{Smith2010,Cooke1986}.

Cooke and van der Driessche also considered Eq.~(\ref{eq:brauercheq})
as well as a generalization to multiple delays in \cite{Cooke1986};
they provided sufficient conditions for the absolute stability. Chin
Yuang-Shun \cite{Yuang-Shun} gave criterion for the case of one delay.
This criterion requires $Q(iy)\ne0$ for all $y\in\mathbb{R}$ and
all $\tau_{1}\ge0$, which includes the time-delay as a parameter.
Instead, a practically employable criterion for absolute stability
in the case of a single delay should be delay-independent and given
by an at most one-parameter condition. In section~\ref{sec:one-delay},
we provide such a criterion and explain its geometrical meaning. The
Pontryagin type conditions, in contrast, are hard to check, and in
the case of multiple delays they are very laborious. 

Several other sufficient conditions are given in \cite{Elsgolz1971a,Kolmanovskii1999},
for the case of two delays in \cite{ShiguiRuan2003}, neutral equations
in \cite{Boese1991}, and some special types of equations in \cite{Hayes1950,Noonburg1969,Boese1995,Baptistini1997,Boese1998,An2019}.
In \cite{Chen1995a,Li2016}, a strong delay-independent stability
is used to give sufficient conditions for the absolute stability,
which is called there weak delay-independent stability. Applications
to control problems are considered in \cite{Wang1999,Wu2001a}.

\section{General criterion for absolute stability}

First, we introduce some notation and definitions. Our notation is
that of Ref. \cite{Hale1993}. Given a bounded linear operator $A$,
its \emph{spectrum} is denoted by $\sigma(A)$ and its \emph{spectral
radius} is denoted by $\rho(A)$. An $n\times n$ matrix $A$ is \emph{Hurwitz}
if $\Re\sigma(A)<0$.

Given a finite family of operators $A_{k}:\mathbb{C}^{n}\rightarrow\mathbb{C}^{n}$
for $k=\{0,1,\dots,m\}$ of Eq. (\ref{eq:lin-1}), we consider feedback
phases $\Phi=(\varphi_{1},\dots,\varphi_{m})\in\mathbb{T}^{m}$ and
\[
S(\Phi)=A_{0}+\sum_{k=1}^{m}A_{k}e^{i\varphi_{k}}.
\]

Our key object is the phase dependent spectrum $\sigma(S(\Phi))\subset\mathbb{C}$,
which will contain key information about the stability of the system.
\begin{defn}
System (\ref{eq:lin-1}) is \emph{absolutely stable} if all roots
$\lambda$ of the characteristic equation (\ref{eq:cheq-1}) possess
negative real parts $\Re\left(\lambda\right)<0$ for all $\tau_{k}\ge0$,
$k=1,\dots,m$. Similarly, we call (\ref{eq:lin-1}) \emph{absolutely
hyperbolic} if all roots have nonzero real parts for all delays. 
\end{defn}

As follows from the general DDE theory \cite{Hale1993}, in case of
absolute stability, all solutions of the initial value problem for
DDE (\ref{eq:lin-1}) are exponentially asymptotically stable, i.e.
$x(t;\varphi)\to0$ exponentially fast with $t\to\infty$ for any
initial function $\varphi(\theta)=x(\theta;\varphi)$, $\theta\in[-\max_{k}\tau_{k},0]$.

The following theorem provides a general criterion for the absolute
stability in the case of multiple discrete delays. 
\begin{thm}
\label{thm:AS-general} System (\ref{eq:lin-1}) is absolutely stable
if and only if the following conditions are satisfied:\\
\textup{(A1.1) {[}}\emph{instantaneous stability}\textup{{]}:} $A_{0}$
is Hurwitz.\\
\textup{(A1.2) {[}nonsingular $S(0)${]}:} $S(0)$ is nonsingular.
\\
\textup{(A1.3) {[}no resonance{]}:} $i\omega\notin\sigma(S(\Phi))$
for all $\Phi\in\mathbb{T}^{m}$ and $\omega\ne0$.

Moreover, the conditions \textup{(A1.2) }and \textup{(A1.3) }are necessary
and sufficient for system (\ref{eq:lin-1}) to be absolutely hyperbolic.
\end{thm}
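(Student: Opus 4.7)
My plan is to first establish a spectral correspondence between purely imaginary roots of the characteristic equation and the matrices $S(\Phi)$, use it to settle the absolute hyperbolicity claim, and then upgrade to absolute stability via a continuous deformation in the delay vector, anchoring the deformation at a hierarchically-large-delay reference point where condition (A1.1) can be brought to bear. The key correspondence is: $\lambda=i\omega$ with $\omega\in\mathbb{R}$ solves (\ref{eq:cheq-1}) for delay vector $(\tau_1,\dots,\tau_m)$ if and only if $i\omega\in\sigma(S(\Phi))$ with $\varphi_k\equiv-\omega\tau_k\pmod{2\pi}$. When $\omega=0$, the relation forces $\Phi=0$; when $\omega\neq 0$, every $\Phi\in\mathbb{T}^m$ is realized by some $\tau\geq 0$ via $\tau_k=(2\pi N_k-\varphi_k)/\omega$ for appropriately chosen integers $N_k$.

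The absolute hyperbolicity assertion follows immediately from this correspondence. \emph{Sufficiency:} under (A1.2) and (A1.3) no point on $i\mathbb{R}$ can be a root for any delay vector. \emph{Necessity:} if (A1.2) fails, $\lambda=0$ is a root at $\tau=0$; if (A1.3) fails with $i\omega\in\sigma(S(\Phi))$ for some $\omega\neq 0$, the realization above produces a root on $i\mathbb{R}$ for a suitable $\tau$. For the necessity of (A1.1) under absolute stability, I would note that as $\min_k\tau_k\to\infty$ the terms $A_k e^{-\lambda\tau_k}$ decay uniformly on $\{\Re\lambda\geq\delta>0\}$, so right-half-plane roots of (\ref{eq:cheq-1}) accumulate on $\sigma(A_0)\cap\{\Re>0\}$, which absolute stability forces to be empty; to rule out $\sigma(A_0)\cap i\mathbb{R}$ I would run a local perturbation analysis around each candidate eigenvalue $i\omega_0$, observing that the leading-order correction for the root bifurcating from $i\omega_0$ is linear in $\sum A_k e^{-i\omega_0\tau_k}$ and can be phased into the open right half-plane by a suitable choice of $\tau$.

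The technical core of the theorem is the sufficiency direction for absolute stability. Fix a target $\tau\in[0,\infty)^m$ and join it by a continuous path to a reference vector $\tau^*=(\tau_1^*,\dots,\tau_m^*)$ with hierarchically large components $1\ll\tau_1^*\ll\cdots\ll\tau_m^*$. On any strip $\{\Re\lambda>-c\}$ the characteristic equation has only finitely many roots and these depend continuously on $\tau$, so the number of roots with $\Re\lambda\geq 0$ is locally constant away from delays at which a root crosses $i\mathbb{R}$. The spectral correspondence together with (A1.2) and (A1.3) rules out every such crossing, so the unstable-root count is constant along the path and it suffices to show it vanishes at $\tau^*$. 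This is where I expect the main obstacle: establishing, using (A1.1), that no root lies in the closed right half-plane for hierarchically large delays. The natural strategy is an induction on $m$ exploiting the standard large-delay splitting of the spectrum into a finite strongly-stable part that approaches $\sigma(A_0)$ (empty in $\{\Re\geq 0\}$ by (A1.1)) and a pseudo-continuous part accumulating on curves parametrized by $\sigma(S(\Phi))$ as $\Phi$ varies over $\mathbb{T}^m$ (kept off $i\mathbb{R}$ by (A1.2) and (A1.3), and confined to the open left half-plane by an additional connectivity argument initialized by (A1.1)). This asymptotic analysis is the engine of the theorem; the correspondence and continuity arguments supply the structural scaffolding around it.
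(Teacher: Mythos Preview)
Your scaffolding---the spectral correspondence, the continuity/crossing argument, and the identification of a ``connectivity'' step as the technical core---matches the paper. But you anchor the sufficiency argument at the wrong end of the delay axis. The paper anchors at $\tau=0$, where the spectrum is \emph{exactly} the finite set $\sigma(S(0))$, and then runs the no-crossing argument outward to arbitrary $\tau$. You anchor at hierarchically large $\tau^*$, which forces you to invoke the full asymptotic spectrum machinery (and uniform control of the approximation) merely to establish the reference point. Both routes ultimately need the same fact---that (A1.1) together with (A1.3) force $S(\Phi)$ to be almost Hurwitz for every $\Phi$---but in the paper's route this is applied only at $\Phi=0$ to conclude $S(0)$ is Hurwitz, after which the anchor is a finite eigenvalue check. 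The paper proves that fact as Lemma~\ref{lem:new-lemma-many}, by an induction on $m$ that tracks how eigenvalues of $A_0+\sum_{k\le r} z_kA_k$ move as each $z_k$ is pushed from $0$ out to the unit circle (Lemmas~\ref{lem:new-lemma-1} and~\ref{lem:new-lemma-2}); this is purely matrix-analytic and does not touch the DDE or any large-delay asymptotics.

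For the necessity of (A1.1) in the boundary case $i\omega_0\in\sigma(A_0)$, your local perturbation argument is fragile: the first-order correction you describe can vanish (for instance when the eigenvector of $A_0$ at $i\omega_0$ is annihilated by every $A_k$), and then ``phasing into the right half-plane'' fails at leading order. The paper avoids this by a structural reduction (Lemma~\ref{lem:new-A0-iomega0}): if $i\omega_0\in\sigma(A_0)$ then either some $i\tilde\omega\in\sigma(S(\tilde\Phi))$ with $\tilde\omega\ne0$, or $\omega_0=0$ and $0\in\sigma(S(\Phi))$ for all $\Phi$. Either alternative violates (A1.2) or (A1.3), whose necessity is already established via the correspondence, so no direct instability estimate is needed.
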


Let us discuss the meaning of the above conditions. Condition (A1.1)
{[}instantaneous stability{]} means that the corresponding instantaneous
ODE system $\dot{x}=A_{0}x$ must be exponentially stable. Condition
(A1.2) {[}nonsingular $S(0)${]} is equivalent to the requirement
that the characteristic quasipolynomial \eqref{eq:cheq-1} does not
possess a zero root. We will later show that, taking into account
(A1.1) {[}instantaneous stability{]} and (A1.3) {[}no resonance{]},
the condition (A1.2) can be replaced by the requirement that $S(0)$
is Hurwitz. Hence, (A1.2) {[}nonsingular $S(0)${]} contributes to
the exponential stability of the ODE system $\dot{x}=S(0)x$ obtained
from \eqref{eq:lin-1} for zero delays.

Condition (A1.3) {[}no resonance{]} means that the spectrum of the
$m$-parametric set of matrices $S(\Phi)$ cannot cross the imaginary
axis apart from the origin. We will show later that, taking into account
(A1.1) {[}instantaneous stability{]}, the condition (A1.3) is equivalent
of having $S(\Phi)$ ``almost Hurwitz'', i.e., $\Re\sigma\left(S(\Phi)\right)<0$
except that the possible zero eigenvalue. We will also show that $\sigma\left(S(\Phi)\right)$
can be in a certain sense related to the asymptotic spectrum in delay
systems with hierarchically long delays. Moreover, purely imaginary
eigenvalues $i\omega$ of $\sigma(S(\Phi))$, which we call \emph{resonances},
appear as characteristic roots of \eqref{eq:cheq-1} at an infinite
sequence of resonant delay times. 

Moreover, purely imaginary values $i\omega\in\sigma(S(\Phi))$ correspond
to certain ''resonances'' and the appearance of critical characteristic
roots for countable number of delays.

The three conditions (A1.1), (A1.2), and (A1.3) are finite-dimensional
problems involving the calculation of the spectrum of some $n\times n$
matrices. The condition (A1.3) {[}no resonance{]} contains a compact
$m$-parameter family of matrices. 

The conditions for absolute stability can be equivalently formulated
as follows.
\begin{thm}
\label{thm:AS-general-1} System (\ref{eq:lin-1}) is absolutely stable
if and only if the following conditions are satisfied:\\
\textup{(A1.2) }\emph{{[}nonsingular $S(0)${]}}\textup{:} $S(0)$
is nonsingular. \\
\textup{(A2.2) {[}almost Hurwitz S($\Phi$){]}: }$S(\Phi)$ is Hurwitz,
except for a possible zero eigenvalue.
\end{thm}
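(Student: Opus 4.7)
My plan is to deduce Theorem~\ref{thm:AS-general-1} from Theorem~\ref{thm:AS-general} by showing that, given (A1.2), the single condition (A2.2) is equivalent to the conjunction (A1.1)+(A1.3). This immediately upgrades the criterion ``absolute stability $\iff$ (A1.1)+(A1.2)+(A1.3)'' into the desired ``absolute stability $\iff$ (A1.2)+(A2.2)''. Since (A2.2)$\Rightarrow$(A1.3) is immediate from the definitions, the substance lies in two implications: (i)~(A1.2)+(A2.2)$\Rightarrow$(A1.1) and (ii)~(A1.1)+(A1.2)+(A1.3)$\Rightarrow$(A2.2).

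For (i) I would argue indirectly via absolute stability. Substituting $\varphi_{k}=-\omega\tau_{k}$ yields the identity
\[
Q(i\omega;\tau_{1},\dots,\tau_{m})=\det\bigl(i\omega I-S(\varphi_{1},\dots,\varphi_{m})\bigr),
\]
so (A2.2) forbids every nonzero purely imaginary root of $Q$ for every $\tau\ge0$, while (A1.2) forbids the root $\lambda=0$. At $\tau=0$ the matrix $S(0)$ is Hurwitz (almost-Hurwitz by (A2.2) and nonsingular by (A1.2)), hence all roots of $Q(\lambda;0)=\det(\lambda I-S(0))$ lie in the open left half-plane. By the standard DDE fact that only finitely many characteristic roots inhabit any given right half-plane and that they depend continuously on $\tau$, the number of roots with $\Re\lambda\ge0$ can change only through an imaginary-axis crossing; since we have just excluded all such crossings, that number remains $0$ for every $\tau\ge0$, yielding absolute stability. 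The necessity part of Theorem~\ref{thm:AS-general} then delivers (A1.1), closing the argument.

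For (ii) Theorem~\ref{thm:AS-general} grants absolute stability, and I would invoke the asymptotic-spectrum correspondence promised in the introduction, which identifies $\sigma(S(\Phi))$ with the set of accumulation points of characteristic roots of $Q(\cdot;\tau)$ under hierarchically large delays $1\ll\tau_{1}\ll\cdots\ll\tau_{m}$. Since every characteristic root lies in $\{\Re\lambda<0\}$ by absolute stability, every such accumulation point lies in the closed half-plane $\{\Re z\le0\}$, so $\sigma(S(\Phi))\subset\{\Re z\le0\}$ for every $\Phi\in\mathbb{T}^{m}$. Combined with (A1.3), which excludes nonzero purely imaginary eigenvalues of $S(\Phi)$, this forces $\sigma(S(\Phi))\subset\{\Re z<0\}\cup\{0\}$, i.e.\ (A2.2). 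The principal obstacle is precisely this appeal to the asymptotic-spectrum correspondence: a direct matrix-theoretic homotopy from $A_{0}$ to $S(\Phi_{0})$ such as $r\mapsto A_{0}+r\sum_{k}A_{k}e^{i\varphi_{k,0}}$, $r\in[0,1]$, fails because for $r<1$ the intermediate coefficients have modulus strictly less than one and lie outside the torus $\mathbb{T}^{m}$ on which (A1.3) provides control, so the asymptotic-spectrum machinery developed later in the paper appears to be essential.
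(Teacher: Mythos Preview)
Your part (i) is correct and essentially coincides with the paper's sufficiency argument (steps 2--3 in Section~\ref{sec:Proofs}): (A1.2) and (A2.2) together exclude all purely imaginary roots of $Q$ via the substitution $\varphi_{k}=-\omega\tau_{k}$, $S(0)$ is Hurwitz, and continuity in $\tau$ forbids any crossing. Invoking the necessity direction of Theorem~\ref{thm:AS-general} to then extract (A1.1) is a legitimate shortcut once that theorem is taken as given; the paper instead derives (A1.1) directly from (A1.2)+(A2.2) via Lemmas~\ref{lem:new-A0-iomega0} and~\ref{lem:last}, but your route is logically sound.

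The genuine gap is in part (ii). The correspondence you invoke does not identify $\sigma(S(\Phi))$ with accumulation points of characteristic roots. The asymptotic continuous spectrum \eqref{eq:AS}, \eqref{eq:spectralmanifolds} consists of points of the form $\varepsilon^{k}\gamma(\omega,\Phi)+i\omega$, all of which accumulate \emph{on the imaginary axis} as $\varepsilon\to0$. An eigenvalue $\mu\in\sigma(S(\Phi_{0}))$ with $\Re\mu>0$ is neither a characteristic root (since $|e^{-\mu\tau_{k}}|<1\ne|e^{i\varphi_{k,0}}|$ for $\tau_{k}>0$) nor a limit of characteristic roots for large delays: in the region $\Re\lambda\ge\tfrac{1}{2}\Re\mu$ the quasipolynomial converges uniformly to $\det(\lambda I-A_{0})$, whose zeros lie in the left half-plane by (A1.1). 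Hence absolute stability gives you no information about $\sigma(S(\Phi))$ away from the imaginary axis, and the inclusion $\sigma(S(\Phi))\subset\{\Re z\le 0\}$ does not follow.

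The paper handles (ii) purely matrix-theoretically through the trichotomy of Lemma~\ref{lem:new-lemma-many}: if $A_{0}$ is Hurwitz then either $S(\Phi)$ is Hurwitz for all $\Phi$, or some nonzero $i\tilde\omega\in\sigma(S(\tilde\Phi))$, or $S(\Phi)$ is Hurwitz except for a zero eigenvalue on a proper subset of $\mathbb{T}^{m}$. Condition (A1.3) excludes the middle case, forcing (A2.2). The key device, in Lemma~\ref{lem:new-lemma-1}, overcomes exactly the homotopy obstruction you noted: when the eigenvalue branch $\lambda(z)$ of $A+zB$ first touches the imaginary axis at some $|\hat z|<1$ (off the torus), one switches to viewing $\det(-i\omega I+A+zB)$ as a polynomial in $z$ with $\omega$ as parameter; since $|z(\omega)|\to\infty$ as $|\omega|\to\infty$, continuity produces $|z(\tilde\omega)|=1$ at two values of $\omega$ on opposite sides of $\hat\omega$, at least one nonzero, yielding a genuine resonance on the torus. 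So the asymptotic-spectrum machinery is \emph{not} essential here, contrary to your closing sentence; the obstacle is dissolved by this ``escape to $|z|=1$ along the imaginary axis'' argument rather than by large-delay asymptotics.
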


The proof will be given in Sec.~\ref{sec:Proofs}.

Combining the asymptotic spectral theory from \cite{Lichtner2011,Sieber2013}
for the case of one delay with Theorem \ref{thm:AS-general}, we can
show that the absolute stability is determined by the stability at
large delays. In particular, we obtain the following 
\begin{cor}
\label{thm:absolut-vs-large} System (\ref{eq:lin-1}) with one delay
is absolutely stable if and only if it is asymptotically exponentially
stable for all sufficiently large delays, i.e. there exists $\tau_{L}$
such that $\Re(\lambda)<0$ for all characteristic roots and all $\tau>\tau_{L}$. 
\end{cor}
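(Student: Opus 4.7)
The forward implication is immediate from the definition: absolute stability requires exponential stability for every $\tau\ge 0$, hence in particular for all $\tau$ larger than any chosen $\tau_L$.

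For the converse, I would assume the DDE \eqref{eq:lin-1} with $m=1$ is asymptotically stable for every $\tau>\tau_L$ and aim to verify the three conditions of Theorem~\ref{thm:AS-general}. Condition (A1.2) is the easiest: $\lambda=0$ is a root of \eqref{eq:cheq-1} if and only if $\det(A_0+A_1)=0$, a $\tau$-independent condition. Thus if $S(0)=A_0+A_1$ were singular, $\lambda=0$ would be a root of the characteristic equation for \emph{every} $\tau$, contradicting the assumed asymptotic stability for large delays.

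For (A1.1) and (A1.3) the plan is to appeal to the asymptotic spectral theory of \cite{Lichtner2011,Sieber2013}, which splits the roots of \eqref{eq:cheq-1} in the regime $\tau\to\infty$ into a \emph{strong spectrum} of bounded roots accumulating on the eigenvalues of $A_0$, and a \emph{pseudo-continuous spectrum} of roots with $\Re\lambda\to 0$ whose imaginary parts accumulate on the set
\[
\Sigma=\{\,\omega\in\mathbb{R}: \exists\,\varphi\in\mathbb{T},\ i\omega\in\sigma(S(\varphi))\,\},
\]
with the sign of the rescaled real part $\tau\,\Re\lambda$ governed by the position of $\sigma(S(\varphi))$ relative to the imaginary axis. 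I would argue by contrapositive. If (A1.1) fails, so $A_0$ has an eigenvalue $\lambda_0$ with $\Re\lambda_0\ge 0$, the strong spectrum produces a root of \eqref{eq:cheq-1} close to $\lambda_0$ for all large $\tau$; when $\Re\lambda_0>0$ this root sits in the open right half-plane, and when $\lambda_0=i\omega_0$ a first-order perturbation proportional to $e^{-i\omega_0\tau}$ forces $\Re\lambda_\tau$ to oscillate in sign and hence to be positive for arbitrarily large $\tau$. If (A1.3) fails, say $i\omega\in\sigma(S(\varphi_\star))$ with $\omega\ne 0$, then along the sequence $\tau_n$ for which $-\omega\tau_n\equiv\varphi_\star\pmod{2\pi}$ the pseudo-continuous branch through $i\omega$ produces roots accumulating at $i\omega$, and the local description of these roots exhibits a subsequence lying in the open right half-plane. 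Each case contradicts stability beyond $\tau_L$, so (A1.1), (A1.2), (A1.3) all hold and Theorem~\ref{thm:AS-general} delivers absolute stability.

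The main obstacle is the borderline analysis in (A1.1) and (A1.3), where one must show that the roots in question actually cross into the open right half-plane rather than tangentially graze the imaginary axis. The resolution rests on two ingredients of \cite{Lichtner2011,Sieber2013}: the first-order expansion of strong roots near eigenvalues of $A_0$, and the signed description of pseudo-continuous curves through $\Sigma$. Combined with the density of $\{e^{-i\omega\tau}:\tau>\tau_L\}$ in the unit circle for any $\omega\ne 0$, these expansions produce the required sign changes of $\Re\lambda_\tau$ along subsequences of delays, and the rest of the proof is bookkeeping between the strong and pseudo-continuous contributions.
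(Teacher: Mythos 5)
Your overall plan (verify (A1.1)--(A1.3) of Theorem~\ref{thm:AS-general} by contrapositive) matches the paper's strategy, and your handling of (A1.2) and of an eigenvalue of $A_0$ with strictly positive real part is sound. The gap lies in the two borderline cases, where you lean on the asymptotic spectral theory of \cite{Lichtner2011,Sieber2013} exactly where its generic hypotheses fail and where your claims are unsubstantiated. If $\lambda_0=i\omega_0\in\sigma(A_0)$, the term $A_1e^{-\lambda\tau}$ is \emph{not} a small perturbation near the imaginary axis: $|e^{-\lambda\tau}|\approx 1$ when $\Re\lambda\approx 0$, so there is no ``first-order expansion of strong roots'' around $i\omega_0$, and the assertion that $\Re\lambda_\tau$ ``oscillates in sign'' has no justification; the strong-spectrum description concerns eigenvalues of $A_0$ off the imaginary axis, and the pseudo-continuous description assumes $i\omega\notin\sigma(A_0)$ (the curves $\gamma_j$ blow up precisely at such frequencies). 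For (A1.3) you set yourself the unnecessary and harder task of producing roots in the \emph{open} right half-plane via a ``signed description'' of the pseudo-continuous curves plus a density argument; as written this is a sketch, and tangential grazing is exactly the degeneracy such an argument must exclude.

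The missing idea --- which also dissolves what you call the main obstacle --- is the exact reappearance of resonances (the paper's Lemma~\ref{lem:new-hierar-lemma}): if $i\omega\in\sigma(A_0+A_1e^{i\varphi})$ with $\omega\ne0$, then substituting $\tau_n=(2\pi n-\varphi)/\omega$ into \eqref{eq:cheq-1} shows that $\lambda=i\omega$ is an \emph{exact} characteristic root for these countably many, arbitrarily large delays. Since the hypothesis is $\Re(\lambda)<0$ strictly for all roots and all $\tau>\tau_L$, a root on the imaginary axis already yields the contradiction; no crossing into the open right half-plane is needed anywhere. This settles (A1.3) directly. For $i\omega_0\in\sigma(A_0)$ the paper uses the elementary algebraic Lemma~\ref{lem:new-A0-iomega0} (continuity of the roots of $z\mapsto\det(-i\omega\mathrm{I}+A_0+zA_1)$): either a nonzero resonance $i\tilde\omega\in\sigma(A_0+A_1e^{i\tilde\varphi})$ exists, handled by reappearance as above, or $\omega_0=0$ and $0\in\sigma(A_0+A_1e^{i\varphi})$ for all $\varphi$, so $\lambda=0$ is a root for every delay --- again a contradiction. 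With these two exact arguments replacing your asymptotic ones, your outline coincides with the paper's proof, which obtains the corollary as the one-delay case of Theorem~\ref{thm:absolute-vs-hierar}.
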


In fact, Corollary \ref{thm:absolut-vs-large} is a consequence of
the following more general statement for the case of multiple delays. 
\begin{thm}
\label{thm:absolute-vs-hierar} System (\ref{eq:lin-1}) is absolutely
stable if and only if the system with hierarchical time delays 
\begin{equation}
\tau_{1}=\varepsilon^{-1},\quad\tau_{k}=\nu_{k}\varepsilon^{-k},\quad k=2,\dots,m,\label{eq:delays-hier}
\end{equation}
is asymptotically exponentially stable for all sufficiently small
$\varepsilon\ll1$ and all $\nu_{k}\in[1,1+\varepsilon^{k-1})$. 
\end{thm}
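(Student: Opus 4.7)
The plan is to prove the two directions separately and invoke Theorem~\ref{thm:AS-general} for the harder one. The forward implication is immediate: absolute stability gives exponential stability at \emph{every} nonnegative delay vector, in particular those in the hierarchical family (\ref{eq:delays-hier}). For the converse, I plan to assume exponential stability for all sufficiently small $\varepsilon$ and all $\nu_k\in[1,1+\varepsilon^{k-1})$, and to verify conditions (A1.1), (A1.2), and (A1.3) of Theorem~\ref{thm:AS-general} by contraposition, using the hierarchical family to manufacture an unstable characteristic root whenever one of these three conditions fails.

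Condition (A1.2) is disposed of immediately: if $\det S(0)=0$, then $\lambda=0$ solves the characteristic equation (\ref{eq:cheq-1}) at every choice of $\tau$, in contradiction with exponential stability. For (A1.1), I would appeal to the ``strong spectrum'' theory for large delays developed in \cite{Lichtner2011,Sieber2013}: characteristic roots of (\ref{eq:cheq-1}) with bounded modulus cluster at $\sigma(A_0)$ as the delays grow. A Rouché-type analysis near a would-be marginal eigenvalue $\mu\in\sigma(A_0)$ with $\Re\mu\ge 0$ then shows that the corresponding characteristic root $\lambda(\tau)$ has strictly positive real part (with $\Re\lambda\sim\log\tau/\tau$ in the borderline case $\Re\mu=0$, as one sees on the scalar model $\lambda-\mu=\sum_k a_k e^{-\lambda\tau_k}$ via the Lambert $W$ function). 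Since the hierarchical delays blow up as $\varepsilon\to 0$, this produces instability at some small $\varepsilon$, contradicting the hypothesis.

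The main obstacle is verifying (A1.3). Suppose, towards contradiction, that $i\omega_0\in\sigma(S(\Phi_0))$ with $\omega_0\neq 0$ and $\Phi_0=(\varphi_{0,1},\dots,\varphi_{0,m})\in\mathbb{T}^m$. I would construct hierarchical delays at which $\lambda=i\omega_0$ is a root of (\ref{eq:cheq-1}), which reduces to the $m$ simultaneous congruences $\omega_0\tau_k\equiv-\varphi_{0,k}\pmod{2\pi}$. The construction is sequential: for $k=1$ the relation $\omega_0/\varepsilon\equiv-\varphi_{0,1}\pmod{2\pi}$ is solved by $\varepsilon=\omega_0/(2\pi n_1-\varphi_{0,1})$ for suitable large $n_1\in\mathbb{Z}$, giving a sequence $\varepsilon\to 0^+$. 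For each $k\ge 2$, as $\nu_k$ ranges over $[1,1+\varepsilon^{k-1})$ the quantity $\omega_0\tau_k=\omega_0\nu_k/\varepsilon^k$ sweeps an arc of length $|\omega_0|/\varepsilon$, which exceeds $2\pi$ for small $\varepsilon$; hence one can pick $\nu_k$ satisfying the $k$-th congruence. The critical quantitative feature is that the prescribed interval width $\varepsilon^{k-1}$ is tailored exactly so that this sweep length is $|\omega_0|/\varepsilon$, independent of $k$, which makes the sequential construction close uniformly in small $\varepsilon$. I expect this number-theoretic step, together with the bookkeeping needed to confirm that the constructed $(\varepsilon,\nu_2,\dots,\nu_m)$ really lies in the prescribed open set, to be the most delicate part of the argument; the subtlety in (A1.1) when $A_0$ has purely imaginary eigenvalues is a secondary technical point that is handled by the Lichtner--Sieber asymptotic expansion.
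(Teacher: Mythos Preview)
Your overall architecture matches the paper's proof: the forward direction is trivial, and for the converse you verify (A1.1)--(A1.3) by contraposition, with the core of the (A1.3) step being the congruence construction $\omega_0\tau_k\equiv-\varphi_{0,k}\pmod{2\pi}$. That construction is exactly the content of the paper's Lemma~\ref{lem:new-hierar-lemma} (``reappearance of resonances''), and your sweep-length computation $|\omega_0|/\varepsilon$ is precisely the mechanism the paper uses to place $\nu_k$ inside $[1,1+\varepsilon^{k-1})$. Your treatment of (A1.2) and of (A1.1) in the strictly unstable case $\Re\mu>0$ (via Rouch\'e/Hurwitz on a disk in the open right half-plane) also coincides with the paper.

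The genuine gap is your handling of (A1.1) when $A_0$ has a \emph{purely imaginary} eigenvalue $i\omega_0$. The Rouch\'e argument breaks down here because any neighborhood of $i\omega_0$ meets $\{\Re\lambda\le 0\}$, where $e^{-\lambda\tau_k}$ is not small. Your proposed fix --- the Lambert~$W$ expansion $\Re\lambda\sim(\log\tau)/\tau$ --- is a scalar, single-delay computation and does not transfer to the matrix, multi-delay setting; and the Lichtner--Sieber asymptotic spectrum you invoke is likewise proved only for a single delay. You characterize this as a ``secondary technical point,'' but it is in fact the place where a direct asymptotic attack is hardest to make rigorous.

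The paper sidesteps this analytically delicate case by an algebraic reduction. Lemma~\ref{lem:new-A0-iomega0} shows that if $i\omega_0\in\sigma(A_0)$ then either (I) some $S(\tilde\Phi)$ has a nonzero imaginary eigenvalue $i\tilde\omega$, or (II) $\omega_0=0$ and $0\in\sigma(S(\Phi))$ for \emph{all} $\Phi$. Case~(I) feeds straight into the resonance mechanism you already have for (A1.3), producing $\lambda=i\tilde\omega$ at hierarchical delays; case~(II) gives $0\in\sigma(S(0))$ and hence $\lambda=0$ for every delay. Thus the marginal (A1.1) case is absorbed into (A1.3) and (A1.2) rather than argued directly, and no asymptotic expansion near the imaginary axis is needed. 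If you want your proof to close, replace the Lambert~$W$/Lichtner--Sieber appeal by this reduction.
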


The stability for one large delay has a useful interpretation from
the point of view of a singular map. By rescaling the time $t=T/\varepsilon$
with $\varepsilon=1/\tau$, we obtain 
\begin{equation}
\varepsilon\dot{x}(T)=A_{0}x(T)+A_{1}x(T-1).\label{eq:DDE-1delay-eps}
\end{equation}
By neglecting formally the left-hand side, we obtain the singular
map 
\begin{equation}
x(T)=-A_{0}^{-1}A_{1}x(T-1),\label{eq:map}
\end{equation}

This hints that the stability of the system can be obtained at a formal
level by a discrete dynamical system. There are many publications
devoted to relations between the DDE (\ref{eq:DDE-1delay-eps}) and
the singular map (\ref{eq:map}), see \cite{Mallet-Paret1986,Chow1989,Ivanov1989,Mallet-Paret1989,Hale1993,Hale1996,Huang2000,Yanchuk2005b,Pellegrin2014a,Yanchuk2015b,Ruschel2017}.
In fact, in order to obtain equivalent stability conditions, one should
consider an extended singular map 
\begin{equation}
x(T)=\left(i\omega\text{I}-A_{0}\right)^{-1}A_{1}e^{i\varphi}x(T-1).\label{eq:map-omega}
\end{equation}
We will provide a discussion about this form in Sec. \ref{sec:singular}.
Using this dynamical system we can conclude absolute stability as
shown in the following 
\begin{cor}
\label{cor:singular} System (\ref{eq:lin-1}) for one delay is absolutely
stable if and only if 
\begin{itemize}
\item $A_{0}$ is Hurwitz; 
\item the discrete dynamical system (\ref{eq:map-omega}) is asymptotically
exponentially stable for $\omega\ne0$; 
\item for $\omega=0$, the discrete dynamical system (\ref{eq:map-omega})
possesses multipliers $\mu$ with $|\mu|\le1$ and $\mu\ne1$, i.e.,
it is either asymptotically exponentially stable or neutral with $\mu=e^{i\varphi}$,
$\varphi\ne2\pi k$. 
\end{itemize}
\end{cor}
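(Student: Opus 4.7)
The plan is to translate each of the three bullet conditions of Corollary~\ref{cor:singular} into one of the conditions of Theorem~\ref{thm:AS-general} in the case $m=1$. The first bullet is literally (A1.1); the content of the proof lies in recognizing the spectral information about the singular map~(\ref{eq:map-omega}) as the same information encoded by (A1.2) and (A1.3).

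First I would observe that, for each fixed $\omega$ and $\varphi$, the linear map $x(T)=M(\omega,\varphi)x(T-1)$ with $M(\omega,\varphi)=(i\omega I-A_{0})^{-1}A_{1}e^{i\varphi}$ is asymptotically exponentially stable iff $\rho(M(\omega,\varphi))<1$, and that this spectral radius equals $\rho((i\omega I-A_{0})^{-1}A_{1})$ since $e^{i\varphi}$ is unimodular. Next I would rewrite (A1.3): using that $i\omega I-A_{0}$ is invertible for all real $\omega$ under (A1.1), one has $i\omega\in\sigma(A_{0}+A_{1}e^{i\varphi})$ iff $e^{-i\varphi}\in\sigma((i\omega I-A_{0})^{-1}A_{1})$, so as $\varphi$ sweeps $\mathbb{T}$ condition (A1.3) becomes the statement that, for every $\omega\ne 0$, no eigenvalue of $(i\omega I-A_{0})^{-1}A_{1}$ lies on the unit circle.

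To pass from this \emph{no-crossing} statement to the second bullet I would invoke continuity of the spectrum in $\omega$ together with the limit $(i\omega I-A_{0})^{-1}A_{1}\to 0$ as $|\omega|\to\infty$. Since on each connected component of $\mathbb{R}\setminus\{0\}$ the eigenvalues begin inside the open unit disk for large $|\omega|$ and, by (A1.3), cannot cross its boundary, they remain strictly inside, giving $\rho((i\omega I-A_{0})^{-1}A_{1})<1$ for every $\omega\ne 0$; the converse implication is immediate. This is the step I expect to be the main obstacle: it requires a careful use of upper semicontinuity of the spectrum to rule out a discontinuous jump of eigenvalues across the unit circle as $\omega$ varies.

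Finally, letting $\omega\to 0$ in the previous step, continuity of the spectrum yields $\rho(A_{0}^{-1}A_{1})\le 1$, so every multiplier $\mu=-e^{i\varphi}\mu_{j}$ of~(\ref{eq:map-omega}) at $\omega=0$ already satisfies $|\mu|\le 1$. The extra exclusion $\mu\ne 1$ translates to $-1\notin\sigma(A_{0}^{-1}A_{1})$, equivalently $\det(A_{0}+A_{1})=\det A_{0}\cdot\det(I+A_{0}^{-1}A_{1})\ne 0$, which is precisely (A1.2). Assembling the three translations identifies the bullets with (A1.1), (A1.3), and (A1.2) respectively, and the corollary then follows from Theorem~\ref{thm:AS-general}.
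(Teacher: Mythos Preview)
Your proof is correct and follows essentially the same route as the paper. The paper packages the argument into Lemma~\ref{lem:omega-phi} (your observation that $e^{-i\varphi}\in\sigma((i\omega I-A_{0})^{-1}A_{1})$ iff $i\omega\in\sigma(A_{0}+A_{1}e^{i\varphi})$, together with the continuity-and-large-$|\omega|$ argument upgrading ``no eigenvalue on the unit circle'' to ``spectral radius $<1$'') and Lemma~\ref{thm:1delay-2} (the reformulation of Theorem~\ref{thm:AS-general} for $m=1$ in terms of the conditions $(A)$, $(B)$, $(C)$), after which Corollary~\ref{cor:singular} is declared immediate; you simply carry out those two lemmas inline.

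One small remark on the third bullet: when you write ``every multiplier $\mu=-e^{i\varphi}\mu_{j}$'' and then conclude that $\mu\ne 1$ translates to $-1\notin\sigma(A_{0}^{-1}A_{1})$, you are implicitly fixing $\varphi=0$ in the map~(\ref{eq:map-omega}); if one instead quantified over all $\varphi$, the condition $\mu\ne 1$ would force the whole unit circle out of $\sigma(-A_{0}^{-1}A_{1})$, which is strictly stronger than (A1.2) and not implied by absolute stability. The paper's Lemma~\ref{thm:1delay-2} makes clear that only condition $(B)$, i.e.\ $\det(A_{0}+A_{1})\ne 0$, is intended at $\omega=0$, so your reading is the right one---but it is worth making the quantifier explicit.
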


\noindent \textbf{Organisation of the manuscript.} We provide examples
of the application of Theorem~\ref{thm:AS-general} to scalar DDE
with multiple delays in Sec. \ref{scalarDDE} and give a geometric
interpretation of the obtained criterion for one delay in a system
of DDE's in Sec. \ref{sec:one-delay} emphasising the role of asymptotic
spectrum for large delays. We consider the case of multiple hierarchical
delays in Sec. \ref{multipleDelays}. We offer proofs of Theorems~\ref{thm:AS-general}
and \ref{thm:AS-general-1} in Sec \ref{sec:Proofs}. Finally, we
provide conclusions and some open problems in Sec. \ref{con}.

\section{Scalar DDEs}

\label{scalarDDE}

In the case of scalar DDEs
\begin{equation}
\dot{x}(t)=a_{0}x(t)+\sum_{k=1}^{m}a_{k}x(t-\tau_{k}),\quad a_{j}\in\mathbb{C},j=1,\dots,m,\label{eq:scalar-2delays}
\end{equation}
 the absolute stability conditions can be significantly simplified. 
\begin{cor}
\label{thm:Scalar} System (\ref{eq:scalar-2delays}) is absolutely
stable if and only if the following conditions are satisfied 
\begin{equation}
\Re\left(a_{0}\right)+\sum_{k=1}^{m}|a_{k}|<0\,\,\,\,\text{for}\,\,\,\,\Im(a_{0})\ne0,\label{eq:kdelays-con1}
\end{equation}
\begin{equation}
a_{0}+\sum_{k=1}^{m}|a_{k}|\le0\,\,\,\,\text{and}\,\,\,\,\sum_{k=0}^{m}a_{k}\ne0\,\,\,\text{for \,\,\,}\Im(a_{0})=0.\label{eq:k-delays-cond2}
\end{equation}
\end{cor}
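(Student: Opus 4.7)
The plan is to deduce this corollary directly from Theorem~\ref{thm:AS-general-1} by specializing it to the scalar case, where $n=1$ and $S(\Phi) = a_0 + \sum_{k=1}^m a_k e^{i\varphi_k}$ is itself a complex number (a $1\times 1$ matrix), so its spectrum is the singleton $\{S(\Phi)\}$. Under this identification, condition (A1.2) is simply $S(0)=\sum_{k=0}^{m}a_k\neq 0$, and condition (A2.2) amounts to requiring that for every $\Phi\in\mathbb{T}^m$ either $\Re(S(\Phi))<0$ or $S(\Phi)=0$. The corollary will then follow by computing explicitly the range of $S(\Phi)$ in the complex plane.

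The first step is to observe that
\[
\max_{\Phi\in\mathbb{T}^m}\Re(S(\Phi)) \;=\; \Re(a_0)+\sum_{k=1}^{m}|a_k|,
\]
with the maximum attained at $\Phi^{*}=(-\arg a_1,\dots,-\arg a_m)$ (ignoring coordinates for which $a_k=0$). At this optimum one has
\[
S(\Phi^{*}) \;=\; \Re(a_0)+\sum_{k=1}^{m}|a_k| + i\,\Im(a_0).
\]
This identifies the unique value of $\Phi$ at which the real part can be zero, and fixes the imaginary part of $S$ there.

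I would then split into two cases according to (A2.2). If $\Im(a_0)\ne 0$, the value $S(\Phi^{*})$ cannot vanish, so (A2.2) forces the strict inequality $\Re(a_0)+\sum|a_k|<0$, yielding \eqref{eq:kdelays-con1}. Conversely, this strict inequality gives $\Re(S(\Phi))<0$ everywhere and in particular $\Re(S(0))<0$, so (A1.2) is automatic. If instead $\Im(a_0)=0$, then $S(\Phi^{*})=a_0+\sum|a_k|$ is real, and (A2.2) is compatible with $S(\Phi^{*})=0$; the weakest form of (A2.2) is then $a_0+\sum|a_k|\le 0$. Here (A1.2) is independent and becomes $\sum_{k=0}^{m}a_k\ne 0$, producing \eqref{eq:k-delays-cond2}.

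The only subtlety worth care — and the step I would treat as the main obstacle — is the borderline situation $\Im(a_0)=0$, $a_0+\sum|a_k|=0$, where I must confirm that the only $\Phi$ on which $\Re(S(\Phi))$ achieves $0$ is (up to the irrelevant coordinates with $a_k=0$) precisely $\Phi^{*}$, and that at this point $S(\Phi^{*})=0$, so that (A2.2) is indeed satisfied. This follows because $\Re(a_k e^{i\varphi_k})=|a_k|$ forces $\varphi_k=-\arg a_k \pmod{2\pi}$ whenever $a_k\ne 0$; any other $\Phi$ gives $\Re(S(\Phi))<0$ strictly. Once this is in hand, comparing with Theorem~\ref{thm:AS-general-1} yields the claimed equivalence.
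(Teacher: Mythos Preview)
Your proposal is correct and follows essentially the same route as the paper: both specialize Theorem~\ref{thm:AS-general-1} to $n=1$, compute $\max_{\Phi}\Re(S(\Phi))=\Re(a_0)+\sum_{k}|a_k|$ attained at $\varphi_k=-\arg a_k$, and then split into the cases $\Im(a_0)\ne 0$ (where the maximizing $S(\Phi^{*})$ cannot vanish, forcing the strict inequality and making (A1.2) automatic) and $\Im(a_0)=0$ (where $S(\Phi^{*})=0$ is permitted by (A2.2), giving the non-strict inequality together with the separate condition $\sum_{k=0}^{m}a_k\ne 0$). Your treatment of the borderline case $a_0+\sum|a_k|=0$ is slightly more explicit than the paper's, but the argument is the same.
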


\begin{proof}
We verify that the conditions of Theorem \ref{thm:AS-general-1} are
equivalent to (\ref{eq:kdelays-con1})--(\ref{eq:k-delays-cond2}).
In order to simplify the condition (A2.2) {[}almost Hurwitz S($\Phi$){]}
for the scalar case, we observe that the maximum of the real part
of $a_{0}+\sum_{k=1}^{m}a_{k}e^{i\varphi_{k}}$ is achieved at $\varphi_{k}=-\arg a_{k}$,
$k=1,\dots,m$, and it equals 
\begin{equation}
\max_{\varphi_{1,},\dots,\varphi_{m}}\left(\Re\left(a_{0}+\sum_{k=1}^{m}a_{k}e^{i\varphi_{k}}\right)\right)=\Re\left(a_{0}\right)+\sum_{k=1}^{m}|a_{k}|.\label{eq:max}
\end{equation}

For $\Im(a_{0})\ne0$, this isolated maximum has nonzero imaginary
part and must be negative accordingly to (A2.2). Therefore, we obtain
(\ref{eq:kdelays-con1}) with strict inequality as an equivalent to
(A2.2). 

For $\Im(a_{0})=0$, the maximum (\ref{eq:max}) is $a_{0}+\sum_{k=1}^{m}\left|a_{k}\right|$.
As zero is allowed accordingly to the condition (A2.2) {[}almost Hurwitz
S($\Phi$){]}, we obtain non-strict inequality in (\ref{eq:k-delays-cond2}). 

Finally, we observe that $\sum_{k=0}^{m}a_{k}\ne0$ is equivalent
to (A1.2) {[}nonsingular $S(0)${]}. This inequality must be added
in (\ref{eq:k-delays-cond2}) only, since $\sum_{k=0}^{m}a_{k}\ne0$
is satisfied under the condition (\ref{eq:kdelays-con1}). 
\end{proof}
Numerical examples with scalar DDEs will be presented in Secs.~\ref{subsec:Scalar-DDEs-with}
and \ref{subsec:Illustration-in-the}.

\section{The case of one delay, geometric interpretation\label{sec:one-delay}}

Since the case of one discrete delay appears most often in applications,
we discuss it here in more detail. In particular, we give a geometric
interpretation using the asymptotic spectrum for large delay. 

\subsection{Auxiliary results}

The following technical Lemmas will be needed.
\begin{lem}
Let $A,B\in\mathbb{C}^{n\times n}$. If $A+Be^{i\varphi}$ is Hurwitz
for all $\varphi\in\mathbb{T}$, then $A$ is Hurwitz. 
\end{lem}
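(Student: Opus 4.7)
The plan is to combine an averaging identity with a uniform spectral-gap argument via the trace of the matrix exponential.

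The key identity I will establish is
\begin{equation*}
\frac{1}{2\pi}\int_0^{2\pi} e^{s(A + Be^{i\varphi})}\,d\varphi \;=\; e^{sA}, \qquad s \geq 0.
\end{equation*}
I would prove this by expanding each power $(A + Be^{i\varphi})^k$ as a sum of non-commutative length-$k$ monomials in $A$ and $B$: a monomial containing $j$ copies of $B$ carries the phase factor $e^{ij\varphi}$. Since $\frac{1}{2\pi}\int_0^{2\pi} e^{ij\varphi}\,d\varphi = \delta_{j,0}$, averaging over $\varphi$ annihilates every term except the pure-$A$ monomial $A^k$; summing the exponential series (with uniform convergence justifying the interchange) gives the identity.

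Because eigenvalues depend continuously on matrix entries and $\mathbb{T}$ is compact, the hypothesis yields a uniform spectral gap $\alpha := -\max_{\varphi \in \mathbb{T}}\max_j \Re\lambda_j(A + Be^{i\varphi}) > 0$. Using the elementary bound $|\mathrm{tr}(e^{sM})| = |\sum_j e^{s\lambda_j(M)}| \leq n\,e^{s\max_j \Re\lambda_j(M)}$, taking traces in the averaging identity gives $|\mathrm{tr}(e^{sA})| \leq n\,e^{-s\alpha}$ for all $s \geq 0$. Setting $r := \max_j \Re\mu_j(A)$ and grouping the eigenvalues of $A$ by real part, one writes $\mathrm{tr}(e^{sA}) = e^{sr}P(s) + O(e^{sr'})$ with $r' < r$ and $P$ a nonzero almost-periodic exponential sum over the eigenvalues with $\Re\mu_j = r$. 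Since a nonzero almost-periodic function cannot tend to $0$ at infinity, the exponential bound on $\mathrm{tr}(e^{sA})$ forces $r \leq -\alpha < 0$, so $A$ is Hurwitz.

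The step I expect to be most delicate is this last one: extracting strict negativity of every $\Re\mu_j(A)$ from an exponential bound on $\mathrm{tr}(e^{sA})$ rests on the classical fact that distinct characters $s \mapsto e^{i\omega s}$ are asymptotically linearly independent. A shorter alternative is to invoke Vesentini's theorem on subharmonicity of $\log\rho$ for holomorphic matrix-valued maps, applied to the entire map $\zeta \mapsto e^{A + \zeta B}$: then $\zeta \mapsto \log\rho(e^{A + \zeta B}) = \max_j \Re\lambda_j(A + \zeta B)$ is subharmonic on $\mathbb{C}$, and the maximum principle on the closed unit disk gives $\max_j \Re\lambda_j(A) \leq -\alpha < 0$ directly.
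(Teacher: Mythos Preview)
Your proposal is correct and takes a genuinely different route from the paper. The paper argues by contradiction via a homotopy of characteristic polynomials: starting from an assumed eigenvalue $\lambda_0$ of $A$ with $\Re\lambda_0\ge 0$, it tracks a continuous branch $\lambda(z)$ of roots of $\det(-\lambda I + A + zBe^{i\varphi})$ from $z=0$ to $z=1$, locates a crossing $\lambda(\hat z)=i\hat\omega$, and then, viewing $\det(-i\omega I + A + zBe^{i\varphi})$ as a polynomial in $z$, follows a branch $z(\omega)$ outward until $|z(\tilde\omega)|=1$, producing a purely imaginary eigenvalue of some $A+Be^{i\psi}$ and a contradiction. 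Your argument instead exploits the averaging identity $\frac{1}{2\pi}\int_0^{2\pi} e^{s(A+Be^{i\varphi})}\,d\varphi = e^{sA}$ (which is correct, by the monomial expansion you describe) together with a uniform spectral gap to bound $|\mathrm{tr}(e^{sA})|$ exponentially, and then reads off $\max_j\Re\mu_j(A)\le -\alpha$ from the asymptotics of the exponential sum; the alternative via Vesentini's subharmonicity of $\log\rho$ is cleaner still and yields the same bound by the maximum principle on the unit disk.

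What each approach buys: the paper's polynomial-tracking argument is entirely elementary (only continuity of roots and growth of $|z(\omega)|$ as $|\omega|\to\infty$) and produces, as a by-product, an explicit imaginary eigenvalue of some $A+Be^{i\psi}$, which is exactly the information reused in the subsequent Lemmas~\ref{lem:new-lemma-1}--\ref{lem:new-lemma-many}. Your averaging/subharmonicity route is more conceptual, gives the quantitative conclusion $\max_j\Re\mu_j(A)\le -\alpha$ in one stroke, and the Vesentini version generalises immediately to any holomorphic one-parameter perturbation $A+\zeta B$; its cost is invoking either almost-periodic asymptotics or a subharmonicity theorem rather than bare continuity.
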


\begin{proof}
Assume the opposite, that is $\lambda_{0}\in\sigma(A)$ with $\Re\left(\lambda_{0}\right)\ge0$.
Consider the function
\[
P(\lambda,z)=\det\left(-\lambda\text{I}+A+zBe^{i\varphi}\right),
\]
which is a polynomial in $\lambda$. There exists a continuous branch
of complex roots $\lambda(z)$ of this polynomial such that $\lambda(0)=\lambda_{0},$$\Re(\lambda(0))\ge0$
and $\Re(\lambda(1))<0$. Due to continuity, there exists a real number
$\hat{z}\in[0,1)$ such that $\lambda(\hat{z})=i\hat{\omega}$. Hence,
we have $P(i\hat{\omega},\hat{z})=0$. Consider $P(i\omega,z)=0$
as a polynomial in $z$. If this polynomial depends trivially on $z$
at $\omega=\hat{\omega}$, then $P(i\hat{\omega},1)=0$ and we immediately
obtain the contradiction to the Hurwitz property of $A+Be^{i\varphi}$.
If $P(i\omega,z)$ is a nontrivial polynomial in $z$ at $\omega=\hat{\omega}$,
then there exists a continuous branch of complex roots $z(\omega)$
such that $z(\hat{\omega})=\hat{z}$, $\left|z(\hat{\omega})\right|<1$,
and $\left|z(\omega)\right|\to\infty$ as $\omega\to\infty$. Hence,
there exists $\tilde{\omega}>\hat{\omega}$ such that $\left|z(\tilde{\omega})\right|=1$.
This means that $P(i\tilde{\omega},e^{i\arg z(\tilde{\omega})})=0$,
and the matrix $A+Be^{i\left(\varphi+\arg z(\tilde{\omega})\right)}$
is not Hurwitz. The contradiction proves the Lemma. 
\end{proof}
\begin{lem}
\label{lem:new-lemma-1}Let $A\in\mathbb{C}^{n\times n}$ be Hurwitz.
Then, for any $B\in\mathbb{C}^{n\times n}$, one of the following
three mutually exclusive cases occurs: \\
I. $A+Be^{i\varphi}$ is Hurwitz for all $\varphi\in\mathbb{T}$;\\
II. There exist $\tilde{\omega}\ne0$ and $\tilde{\varphi}$ such
that $i\tilde{\omega}\in\sigma\left(A+Be^{i\tilde{\varphi}}\right)$;\\
III. There exist one or several values $\tilde{\varphi}_{1},\dots,\tilde{\varphi}_{l}$
($l\le n$) such that $0\in\sigma\left(A+Be^{i\tilde{\varphi}_{j}}\right)$,
$j=1,\dots,l$, and $A+Be^{i\varphi}$ is Hurwitz for all $\varphi\ne\tilde{\varphi}_{j}$,
$j=1,\dots,l$. 
\end{lem}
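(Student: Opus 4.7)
The plan is to reduce the statement to a trichotomy based on whether purely imaginary eigenvalues of $A+Be^{i\varphi}$ exist, and to bootstrap the branch-continuation technique already developed in the preceding lemma. Mutual exclusivity is immediate from the definitions: Case~I rules out purely imaginary eigenvalues altogether, while in Case~III the only purely imaginary eigenvalue of $A+Be^{i\varphi}$ appearing for any $\varphi\in\mathbb{T}$ is $\lambda=0$, which contradicts the requirement $\tilde\omega\ne 0$ of Case~II. So the entire task is to prove exhaustiveness: assuming Case~I fails, I must deduce Case~II or Case~III.

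First I will show that the set $\mathcal{S}:=\{\varphi\in\mathbb{T}:A+Be^{i\varphi}\text{ has a purely imaginary eigenvalue}\}$ is nonempty. Fix $\varphi^{*}$ with $A+Be^{i\varphi^{*}}$ not Hurwitz and consider the homotopy $r\mapsto A+rBe^{i\varphi^{*}}$, $r\in[0,1]$. It starts at the Hurwitz matrix $A$ and ends at a non-Hurwitz matrix, so by continuity of the spectrum some eigenvalue branch crosses the imaginary axis, giving $(r_{0},\omega_{0})$ with $r_{0}\in(0,1]$. Setting $P(\lambda,z):=\det(\lambda I-A-zB)$ and $z_{0}=r_{0}e^{i\varphi^{*}}$, this says $P(i\omega_{0},z_{0})=0$ with $|z_{0}|\le 1$. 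If $|z_{0}|=1$, I am done; otherwise I apply the trick from the preceding lemma: either $P(i\omega_{0},z)$ vanishes identically in $z$, in which case $i\omega_{0}\in\sigma(A+Be^{i\tilde\varphi})$ for every $\tilde\varphi$ and the possibility $\omega_{0}=0$ is excluded by $P(0,0)=(-1)^{n}\det A\ne 0$; or a continuous branch $z(\omega)$ with $z(\omega_{0})=z_{0}$, $|z(\omega_{0})|<1$, and $|z(\omega)|\to\infty$ as $\omega\to\infty$ exists, so the intermediate value theorem yields $\tilde\omega\ne 0$ with $|z(\tilde\omega)|=1$ and hence $\arg z(\tilde\omega)\in\mathcal{S}$.

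I then split according to whether some $\varphi\in\mathcal{S}$ carries a nonzero imaginary eigenvalue. If so, Case~II holds and we are done. Otherwise every $\varphi\in\mathcal{S}$ has only $\lambda=0$ as its purely imaginary eigenvalue, so $\mathcal{S}\subset\{\varphi:\det(A+Be^{i\varphi})=0\}$. Since $z\mapsto\det(A+zB)$ is a polynomial of degree at most $n$ that does not vanish at $z=0$ (using $\det A\ne 0$), it has at most $n$ roots on $|z|=1$, producing a finite list $\tilde\varphi_{1},\dots,\tilde\varphi_{l}$ with $l\le n$, which is nonempty because $\mathcal{S}\ne\emptyset$.

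The remaining step — which I expect to be the main obstacle — is to verify that $A+Be^{i\varphi}$ is Hurwitz at every $\varphi\notin\{\tilde\varphi_{1},\dots,\tilde\varphi_{l}\}$, placing us in Case~III. I argue by contradiction: if some such $\varphi_{0}$ gives a non-Hurwitz matrix, rerun the scaling and branch construction along $r\mapsto A+rBe^{i\varphi_{0}}$ to obtain $z_{0}=r_{0}e^{i\varphi_{0}}$ with $P(i\omega_{0},z_{0})=0$; since $\varphi_{0}\notin\{\tilde\varphi_{j}\}$, the point $z_{0}$ cannot lie on the unit circle, so $|z_{0}|<1$, and the branch-continuation argument produces $\tilde\omega\ne 0$ with $i\tilde\omega\in\sigma(A+Be^{i\tilde\varphi})$, contradicting the failure of Case~II just assumed. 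The delicate part is exactly as in the preceding lemma: the branch $z(\omega)$ must be tracked through possible degree drops of $P(i\omega,\cdot)$ in $z$ (when $B$ is singular, or where the leading coefficient in $z$ vanishes at isolated $\omega$), and the intermediate value step has to be arranged so that $|z(\tilde\omega)|=1$ is attained at a strictly nonzero $\tilde\omega$ — which is achieved by following the branch toward $\omega\to+\infty$.
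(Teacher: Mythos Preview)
Your argument is correct and follows essentially the same route as the paper: both use the radial homotopy $r\mapsto A+rBe^{i\varphi_0}$ to produce a purely imaginary crossing at some $z_0$ with $|z_0|\le 1$, and then, when $|z_0|<1$, continue a branch $z(\omega)$ of $\det(i\omega I-A-zB)=0$ out to the unit circle. Your organization via the set $\mathcal S$ and the observation that $\det(A+zB)$ has at most $n$ roots on $|z|=1$ makes the finiteness in Case~III a bit more explicit than the paper's treatment.

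One small correction is needed in your last step: following the branch only toward $\omega\to+\infty$ does not by itself force $\tilde\omega\ne 0$ when $\omega_0<0$, since the first crossing of $|z(\omega)|=1$ as $\omega$ increases from $\omega_0$ may well occur at $\omega=0$. The paper's fix is to follow the branch in \emph{both} directions, obtaining crossings $\tilde\omega_1\in(\omega_0,\infty)$ and $\tilde\omega_2\in(-\infty,\omega_0)$, and to note that these lie in disjoint intervals and hence cannot both equal zero.
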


\begin{proof}
We must show that if $A+Be^{i\varphi}$ is not Hurwitz for some $\varphi$,
then either the case II or III is realized. 

Assume that $A+Be^{i\varphi_{0}}$ is not Hurwitz, i.e.,
\begin{equation}
\det\left(-\lambda_{1}\text{I}+A+Be^{i\varphi_{0}}\right)=0\,\,\text{with}\,\,\,\Re(\lambda_{1})\ge0.\label{eq:non-hurwitz}
\end{equation}

Consider the function 
\[
Q(\lambda,z)=\det\left(-\lambda\text{I}+A+zBe^{i\varphi_{0}}\right),
\]
which is a polynomial in $\lambda$. There exists a continuous branch
of complex roots $\lambda(z)$, $z\in\mathbb{C}$, which solves the
polynomial $Q(\lambda(z),z)=0$ and satisfies $\lambda(1)=\lambda_{1}$,
$\Re(\lambda(1))\ge0$. Moreover, $\Re\left(\lambda(0)\right)<0$
due to the fact that $A$ is Hurwitz. Hence, due to continuity of
$\lambda(z)$, there exists $\hat{z}$ with $|\hat{z}|\le1$ such
that $\lambda(\hat{z})=i\hat{\omega}$ and $\Re\left(\lambda(z)\right)<0$
for all $|z|<|\hat{z}|$. That is, we obtain
\begin{align}
Q(i\hat{\omega},\hat{z}) & =\det\left(-i\hat{\omega}\text{I}+A+\hat{z}Be^{i\varphi_{0}}\right)=0,\quad\left|\hat{z}\right|\le1,\label{eq:polyxxx}\\
 & \Re\left(\lambda(z)\right)<0\,\,\,\text{for\,\,all}\,\,|z|<|\hat{z}|.\label{eq:56}
\end{align}

Consider the case $\left|\hat{z}\right|=1$ and denote $\hat{z}=e^{i\hat{\varphi}}$.
For convenience, we rewrite Eqs.~\eqref{eq:polyxxx}--\eqref{eq:56}
for this case: 
\begin{align}
 & \det\left(-i\hat{\omega}\text{I}+A+Be^{i\left(\varphi_{0}+\hat{\varphi}\right)}\right)=0,\label{eq:polyxxx-1}\\
 & \Re\left(\lambda(z)\right)<0\,\,\,\text{for\,\,all}\,\,|z|<1.\label{eq:56-1}
\end{align}
Due to \eqref{eq:56-1}, by continuity, we obtain $\Re\left(\lambda(e^{i\varphi})\right)\le0$
for all $\varphi$. Hence, it holds that either $i\hat{\omega}\in\sigma\left(A+Be^{i\left(\varphi_{0}+\hat{\varphi}\right)}\right)$
or $A+Be^{i\varphi}$ is Hurwitz for all $\varphi\ne\varphi_{0}+\hat{\varphi}$.
There can be up to $n$ isolated pairs $(\hat{\omega},\hat{\varphi})$
satisfying \eqref{eq:polyxxx-1}. 

If there are $\hat{\omega}\ne0$ among the solutions of \eqref{eq:polyxxx-1},
then we immediately obtain the case II of Lemma with $\tilde{\omega}=\hat{\omega}$
and $\tilde{\varphi}=\varphi_{0}+\hat{\varphi}$. If there are only
zero values $\hat{\omega}=0$, we obtain the case III of Lemma with
$\tilde{\varphi}=\varphi_{0}+\bar{\varphi}$. 

Consider the case $\left|\hat{z}\right|<1$ and the function 
\[
Q(i\omega,z)=\det\left(-i\omega\text{I}+A+zBe^{i\varphi}\right)
\]
as a polynomial in $z$. We have, in particular, from \eqref{eq:polyxxx},
that $z(\hat{\omega})=\hat{z}$, $\left|\hat{z}\right|<1$. The polynomial
$Q(i\hat{\omega},z)$ depends non-trivially on $z$, i.e., some coefficient
of this polynomial does not vanish. Indeed, otherwise we obtain $\det\left(-i\hat{\omega}\text{I}+A\right)=Q(i\hat{\omega},0)=Q(i\hat{\omega},\hat{z})=0$,
which contradicts the assumption that $A$ is Hurwitz. Therefore,
there exists a branch of complex roots $z(\omega)$ of $Q(i\omega,z)$,
which depends continuously on $\omega$, and $z(\hat{\omega})=\hat{z}$,
$\left|\hat{z}\right|<1$. Moreover, it is easy to see that $\left|z\right|\to\infty$
as $|\omega|\to\infty$. Due to continuity, there exist $\tilde{\omega}_{1}\in(\hat{\omega},\infty)$
and $\tilde{\omega}_{2}\in(-\infty,\hat{\omega})$ such that $\left|z(\tilde{\omega}_{1,2})\right|=1.$
The two points $\tilde{\omega}_{1,2}$ cannot be zero at the same
time. Let $\tilde{\omega}$ be such nonzero point. Therefore, we have
shown that $i\tilde{\omega}\in\sigma\left(A+Be^{i\tilde{\varphi}}\right)$
with $\tilde{\varphi}=\varphi_{0}+\arg z(\tilde{\omega})$. This corresponds
to the case II. 
\end{proof}

\subsection{Absolute stability conditions in terms of extended singular maps
\eqref{eq:map-omega}}

The following lemma shows that condition (A2.2) {[}almost Hurwitz
S($\Phi$){]} can be recast in terms of a spectral radius criterion. 
\begin{lem}
\label{lem:omega-phi}Assume $A_{0}$ is Hurwitz. Then the following
statements hold:

(I) $e^{-i\varphi}\in\sigma\left(\left(i\omega\text{I}-A_{0}\right)^{-1}A_{1}\right)$
if and only if $i\omega\in\sigma\left(A_{0}+A_{1}e^{i\varphi}\right)$.

(II) $\rho\left[\left(i\omega\text{I}-A_{0}\right)^{-1}A_{1}\right]<1$
for all $\omega\in\mathbb{R}$ if and only if $A_{0}+A_{1}e^{i\varphi}$
is Hurwitz for all $\varphi\in S^{1}$.

(III) $\rho\left[\left(i\omega\text{I}-A_{0}\right)^{-1}A_{1}\right]<1$
for all $\omega\ne0$ if and only if $\Re\left[\sigma\left(A_{0}+A_{1}e^{i\varphi}\right)\setminus\{0\}\right]<0$
for all $\varphi\in S^{1}$. 
\end{lem}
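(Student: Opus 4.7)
The plan is to treat the three parts in order, with (II) and (III) reduced to part (I) via continuity of the eigenvalues of $M_\omega:=(i\omega\text{I}-A_{0})^{-1}A_{1}$, viewed as a function of $\omega\in\mathbb{R}$. For part (I) the Hurwitzness of $A_{0}$ makes $i\omega\text{I}-A_{0}$ invertible for every real $\omega$, so I can factor
\[
\det(i\omega\text{I}-A_{0}-A_{1}e^{i\varphi})=\det(i\omega\text{I}-A_{0})\,\det\bigl(\text{I}-e^{i\varphi}M_\omega\bigr).
\]
The first factor is nonzero, so the left side vanishes if and only if the second does, i.e.\ if and only if $e^{-i\varphi}\in\sigma(M_\omega)$. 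The same computation with $e^{i\varphi}$ replaced by any $z\in\mathbb{C}\setminus\{0\}$ gives the generalization $i\omega\in\sigma(A_{0}+zA_{1})\Leftrightarrow 1/z\in\sigma(M_\omega)$, which I will invoke repeatedly.

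For part (II) the direction ``$A_{0}+A_{1}e^{i\varphi}$ Hurwitz for all $\varphi\Rightarrow\rho(M_\omega)<1$ for all $\omega$'' follows because Hurwitzness excludes any imaginary eigenvalue, so by (I) no eigenvalue of $M_\omega$ lies on the unit circle; combined with $\|M_\omega\|\to 0$ as $|\omega|\to\infty$ and the continuity of the spectrum in $\omega$, this forces $\sigma(M_\omega)$ into the open unit disk for all $\omega$. For the converse I will argue by homotopy: if $A_{0}+A_{1}e^{i\varphi^{*}}$ fails to be Hurwitz for some $\varphi^{*}$, pick a continuous branch $\lambda(t)$ of eigenvalues of $A_{0}+tA_{1}e^{i\varphi^{*}}$ with $\Re\lambda(0)<0$ and $\Re\lambda(1)\geq 0$, let $t^{*}\in(0,1]$ be the smallest time with $\lambda(t^{*})=i\omega$, and apply the generalization of (I) with $z=t^{*}e^{i\varphi^{*}}$ to obtain $1/z\in\sigma(M_\omega)$ of modulus $1/t^{*}\geq 1$, contradicting $\rho(M_\omega)<1$.

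For part (III) the forward direction is the same continuity argument restricted to $\omega\neq 0$: assuming some $\omega_{0}\neq 0$ with $\rho(M_{\omega_{0}})\geq 1$, the continuity of $\rho(M_\omega)$ and its decay as $|\omega|\to\infty$ produce an $\omega_{1}$ on the same side of zero as $\omega_{0}$ with $\rho(M_{\omega_{1}})=1$, so $\omega_{1}\neq 0$; (I) then yields a nonzero imaginary eigenvalue of some $A_{0}+A_{1}e^{i\varphi}$, contradicting the right-hand side. For the converse I will route the argument through the trichotomy of Lemma \ref{lem:new-lemma-1} applied to $A=A_{0}$, $B=A_{1}$: case II there (an imaginary eigenvalue with $\omega\neq 0$) is excluded by the right-hand side via (I), so we must be in case I or case III, in both of which $A_{0}+A_{1}e^{i\varphi}$ has no nonzero imaginary eigenvalue for any $\varphi$. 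Hence by (I) no eigenvalue of $M_\omega$ sits on the unit circle for $\omega\neq 0$, and the same continuity argument as in (II) confines $\sigma(M_\omega)$ to the open unit disk for every $\omega\neq 0$.

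The main obstacle I anticipate is exactly in the converse of (III): unlike (II), the statement allows zero eigenvalues of $A_{0}+A_{1}e^{i\varphi}$ and imposes no control on $\rho(M_{0})$, so a direct homotopy would face crossings of the eigenvalue path through the origin, where the generalization of (I) degenerates ($z=0$ makes $1/z$ meaningless). Routing the argument through Lemma \ref{lem:new-lemma-1} is the cleanest way to bypass this, replacing a delicate homotopy by a structural dichotomy at the cost of relying on that lemma. A minor, standard care point throughout is the continuous selection of eigenvalue branches through possibly repeated roots, which I will handle by invoking continuous dependence of the zeros of the characteristic polynomial on its coefficients (e.g.\ via Rouch\'e's theorem).
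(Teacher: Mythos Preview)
Your approach coincides with the paper's: part (I) via the determinant factorization, and parts (II)--(III) by combining (I) with the continuity and decay of $M_\omega$ as $|\omega|\to\infty$ together with Lemma~\ref{lem:new-lemma-1}. The homotopy you run in the ``$\rho<1\Rightarrow$ Hurwitz'' direction of (II) is precisely the crossing argument that underlies Lemma~\ref{lem:new-lemma-1}, inlined; the paper simply cites the lemma instead, so there is no genuine difference in route.

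There is, however, a logical muddle in your treatment of the second implication of (III). You are assuming the left-hand side ($\rho(M_\omega)<1$ for all $\omega\neq 0$), yet you write that case~II of Lemma~\ref{lem:new-lemma-1} is ``excluded by the right-hand side via (I)''. It is excluded by your \emph{hypothesis} via (I): case~II would give $i\tilde\omega\in\sigma(A_0+A_1e^{i\tilde\varphi})$ with $\tilde\omega\neq 0$, hence by (I) a unit-modulus eigenvalue of $M_{\tilde\omega}$, contradicting $\rho(M_{\tilde\omega})<1$. More seriously, once you have landed in case~I or case~III of Lemma~\ref{lem:new-lemma-1} you should stop, because those two cases \emph{are} the almost-Hurwitz conclusion ($A_0+A_1e^{i\varphi}$ Hurwitz for all $\varphi$, or Hurwitz except for a zero eigenvalue at finitely many $\varphi$). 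Instead your final sentence routes back through (I) and the continuity argument to derive $\rho(M_\omega)<1$ for $\omega\neq 0$, which is the hypothesis you started from---a circular step. The fix is one line: after excluding case~II, note that cases~I and~III directly yield $\Re\bigl[\sigma(A_0+A_1e^{i\varphi})\setminus\{0\}\bigr]<0$ for all $\varphi$, which is the desired right-hand side.
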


\begin{proof}
(I) follows from the equivalent expressions 
\begin{eqnarray*}
\det\left[e^{-i\varphi}\text{I}-\left(i\omega-A_{0}\right)^{-1}A_{1}\right] & = & 0,\\
\det\left[i\omega\text{I}-A_{0}-A_{1}e^{i\varphi}\right] & = & 0.
\end{eqnarray*}

(II) Assume $\rho\left[\left(i\omega-A_{0}\right)^{-1}A_{1}\right]<1$
for all $\omega\in\mathbb{R}$. Then (I) implies that the matrix $A_{0}+A_{1}e^{i\varphi}$
possesses no purely imaginary eigenvalues. Since $A_{0}$ is Hurwitz,
Lemma~\ref{lem:new-lemma-1} implies that $A_{0}+A_{1}e^{i\varphi}$
is also Hurwitz. 

To prove the converse, assume $A_{0}+A_{1}e^{i\varphi}$ is Hurwitz
and let us show that the condition $\rho\left[\left(i\omega-A_{0}\right)^{-1}A_{1}\right]<1$
holds for all $\omega$. It clearly holds for sufficiently large $\omega$.
If, it fails for some $\omega,$ then, there must exist $\omega=\omega_{0}$
such that $e^{i\varphi}\in\sigma\left[\left(i\omega_{0}-A_{0}\right)^{-1}A_{1}\right]=1$.
However, the statement (I) implies that $A_{0}+A_{1}e^{i\varphi}$
is not Hurwitz. 

(III) This statement follows from the continuity of eigenvalues as
functions of $\omega$ and statements (I) and (II). 
\end{proof}
With Lemma~\ref{lem:omega-phi} we obtain that for systems with one
delay the criteria for absolute stability from Theorems \ref{thm:AS-general}
and \ref{thm:AS-general-1} can be equivalently reformulated as follows.
\begin{lem}
\label{thm:1delay-2} System (\ref{eq:lin-1}) with a single delay
is absolutely stable if and only if the following conditions are satisfied:

\emph{(A): $A_{0}$ is Hurwitz.}

\emph{(B): $A_{0}+A_{1}$ is nonsingular}

\emph{(C)
\begin{equation}
\rho\left(\left(i\omega I-A_{0}\right)^{-1}A_{1}\right)<1\,\text{ for all }\omega\ne0.\label{eq:condition-C-with-omega}
\end{equation}
}
\end{lem}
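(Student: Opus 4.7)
The plan is to derive Lemma \ref{thm:1delay-2} directly from Theorem \ref{thm:AS-general} by translating its conditions into the claimed form via Lemma \ref{lem:omega-phi}. For $m=1$ the phase vector $\Phi$ reduces to a single angle $\varphi\in S^{1}$ and $S(\varphi)=A_{0}+A_{1}e^{i\varphi}$, so condition (A1.1) becomes exactly (A), and condition (A1.2) becomes exactly (B) since $S(0)=A_{0}+A_{1}$. Hence the entire content of the proof is to show that, under (A), condition (C) is equivalent to the no-resonance condition (A1.3).

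The forward direction (C) $\Rightarrow$ (A1.3) is immediate from Lemma \ref{lem:omega-phi}(I). If $\rho((i\omega I-A_{0})^{-1}A_{1})<1$ for every $\omega\neq 0$, then no unimodular number $e^{-i\varphi}$ can belong to $\sigma((i\omega I-A_{0})^{-1}A_{1})$, and so $i\omega\notin\sigma(A_{0}+A_{1}e^{i\varphi})$ for all $\varphi$ and all $\omega\neq 0$, which is exactly (A1.3). The converse is the step that genuinely uses Lemma \ref{lem:new-lemma-1}. Assuming (A) and (A1.3), I would apply the trichotomy in Lemma \ref{lem:new-lemma-1} to $A=A_{0}$, $B=A_{1}$: case II is precisely the existence of a nonzero imaginary eigenvalue of $A_{0}+A_{1}e^{i\varphi}$ and is ruled out by (A1.3). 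Only cases I (global Hurwitzness) and III (Hurwitz away from isolated phases at which a zero eigenvalue appears) remain, and in both one has $\Re\bigl[\sigma(A_{0}+A_{1}e^{i\varphi})\setminus\{0\}\bigr]<0$ for every $\varphi\in S^{1}$. An application of Lemma \ref{lem:omega-phi}(III) then converts this ``almost Hurwitz'' statement back into (C), completing the chain of equivalences.

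The principal obstacle is the qualitative gap between what (A1.3) directly provides via part (I) of Lemma \ref{lem:omega-phi}---namely, the absence of eigenvalues of modulus exactly one---and the strict spectral-radius inequality $\rho<1$ demanded by (C). Mere continuity of $\omega\mapsto\rho((i\omega I-A_{0})^{-1}A_{1})$ is not enough to bridge this gap, because without additional one-sided information the spectral radius might enter the open unit disk only to reemerge beyond it. The Hurwitz assumption on $A_{0}$ supplies exactly this one-sidedness by anchoring the eigenvalue branches of $A_{0}+A_{1}e^{i\varphi}$ in the open left half-plane for a reference phase, and Lemma \ref{lem:new-lemma-1} is the mechanism that propagates this anchoring across all $\varphi$ in the absence of imaginary crossings.
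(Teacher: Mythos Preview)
Your proposal is correct and follows essentially the same route as the paper: the paper's one-line justification ``With Lemma~\ref{lem:omega-phi} we obtain\ldots'' amounts precisely to identifying (A) with (A1.1), (B) with (A1.2), and then using Lemma~\ref{lem:omega-phi}(III) together with the (A1.3)$\Leftrightarrow$(A2.2) equivalence (which for $m=1$ is exactly your invocation of Lemma~\ref{lem:new-lemma-1}) to match (C) with the remaining condition. Your explicit discussion of why the naive continuity argument fails and how Lemma~\ref{lem:new-lemma-1} supplies the missing one-sided anchoring is a faithful unpacking of what the paper leaves implicit.
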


Lemma~\ref{thm:1delay-2} implies immediately the statement of Corollary~\ref{cor:singular}. 

\subsection{Absolute stability and asymptotic spectrum\label{subsec:Absolute-stability-and}}

In view of Corollary \ref{thm:absolut-vs-large}, the stability for
large delays and the absolute stability are equivalent. In this section,
we discuss this relation in more details. The spectrum of DDEs can
be well approximated in the limit $\tau\to\infty$. More specifically,
the spectrum of DDEs with one large discrete delay can be generically
divided into two parts \cite{Lichtner2011,Sieber2013,Yanchuk2015b}:

(i) The \emph{strongly unstable} part $\mathcal{S}_{\text{su}}$,
which is approximated by the unstable spectrum of $A_{0}$, i.e. $\sigma(A_{0})$
with $\Re\sigma\left(A_{0}\right)>0$, and

(ii) the \emph{pseudo-continuous} spectrum $\mathcal{S}_{\text{pc}}$,
which is approximated by the curves 
\begin{equation}
B_{1}=\left\{ z\in\mathbb{C}:\,z=\frac{1}{\tau}\gamma_{j}(\omega)+i\omega,\quad\omega\in\mathbb{R},\quad j=1,\dots,m_{1}\right\} \label{eq:AS}
\end{equation}
in the complex plane. The functions $\gamma_{j}(\omega)$ are given
by\emph{ 
\begin{equation}
\gamma_{j}(\omega)=-\ln\left|Y_{j}(\omega)\right|,\label{eq:gamma}
\end{equation}
}where $Y_{j}(\omega)$, $j=1,\text{rank}A_{1}$, are the roots of
the \emph{spectral polynomial 
\begin{equation}
p(i\omega,Y)=\det\left[i\omega\cdot\mathrm{I}-A_{0}-A_{1}Y\right].\label{eq:SpPo}
\end{equation}
}In particular, the functions $\gamma_{j}(\omega)$ are continuous
except for the isolated points $\omega_{s}$ where $\lim_{\omega\to\omega_{s}}\gamma_{j}(\omega)=\pm\infty$.
The points $\omega_{s}$ where $\lim_{\omega\to\omega_{s}}\gamma_{j}(\omega)=+\infty$
are determined by the condition $i\omega_{s}\in\sigma(A_{0})$. Clearly,
if such a point exists, it leads to an instability for large delays. 
\begin{defn}
The set (\ref{eq:AS}) is called the \emph{asymptotic continuous spectrum}
\cite{Lichtner2011}. 
\end{defn}

We are now ready to provide an interpretation of the conditions of
Lemma~\ref{thm:1delay-2} in terms of the asymptotic spectrum. Condition
(A), i.e. $\Re\left(\sigma(A_{0})\right)<0$, is also the same as
(A1.1) {[}instantaneous stability{]} in Theorems \ref{thm:AS-general}
and \ref{thm:AS-general-1}. It guarantees that, first, the strongly
unstable spectrum is absent, and, second, the asymptotic continuous
spectrum possess no singularities, see~Fig.~\ref{fig:Spectrum-scalar}.
Condition (B) is the same as (A1.2) {[}nonsingular $S(0)${]} in Theorems
\ref{thm:AS-general} and \ref{thm:AS-general-1}, and it excludes
the existence of the trivial eigenvalue $\lambda=0$. Condition (C)
guarantees that the asymptotic continuous spectrum is located in the
open left half of the complex plane $\Re(\lambda)<0$, possibly touching
the origin, see Fig.~\ref{fig:Spectrum-scalar}. Indeed, let $\mu$
be an eigenvalue of $\left(i\omega I-A_{0}\right)^{-1}A_{1}$. Then
the condition (C) from Theorem~\ref{thm:1delay-2} can be rewritten
as 
\[
\det\left[i\omega I-A_{0}-\mu^{-1}A_{1}\right]=0,\quad\left|\mu\right|<1,\,\,\omega\ne0,
\]
which means that all roots $Y_{j}(\omega)$, $j=1,\text{rank}A_{1}$,
of the spectral polynomial (\ref{eq:SpPo}) satisfy $|Y_{j}(\omega)|>1$
for $\omega\ne0$, implying $\gamma_{j}(\omega)<0$ for all $\omega\ne0$.

\subsection{Scalar DDEs with one delay\label{subsec:Scalar-DDEs-with}}

As a simple illustration, we present the complex scalar DDE 
\begin{equation}
\dot{x}(t)=a_{0}x(t)+a_{1}x(t-\tau)\label{eq:scalar}
\end{equation}
with the characteristic equation 
\begin{equation}
\lambda-a_{0}-a_{1}e^{-\lambda\tau}=0,\label{eq:scalar-chareq}
\end{equation}
$a_{0},a_{1}\in\mathbb{C}$. For this case, the real part of the asymptotic
continuous spectrum has a unique global maximum at $\omega=\Im(a_{0})$.
Indeed, the spectral polynomial (\ref{eq:SpPo}) has one root $Y=(i\omega-a_{0})/a_{1}$
leading to 
\[
\gamma(\omega)=-\frac{1}{2}\ln\left(\left(\omega-\Im(a_{0})\right)^{2}+\left(\Re(a_{0})\right)^{2}\right)+\ln\left|a_{1}\right|
\]
with 
\begin{equation}
\max_{\omega\in\mathbb{R}}\gamma(\omega)=\gamma(\Im(a_{0}))=-\ln\left|\frac{\Re(a_{0})}{a_{1}}\right|.\label{eq:maxgamma}
\end{equation}
The absolute stability criterion for (\ref{eq:scalar}) follows form
Theorem~\ref{thm:Scalar}: 
\begin{cor}
\label{cor:scalar-onedelay}The DDE (\ref{eq:scalar}) is absolutely
stable if and only if the following conditions are satisfied: 
\begin{equation}
\begin{cases}
\Re(a_{0})+\left|a_{1}\right|<0, & \Im(a_{0})\ne0\\
a_{0}+\left|a_{1}\right|\le0\,\,\text{and}\,\,a_{0}+a_{1}\ne0, & \Im(a_{0})=0.
\end{cases}\label{eq:cond-a2}
\end{equation}
\end{cor}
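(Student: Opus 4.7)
The plan is to obtain the statement by specialising Corollary~\ref{thm:Scalar} to $m=1$. Setting $m=1$ in \eqref{eq:kdelays-con1} produces the first branch $\Re(a_{0})+|a_{1}|<0$ for $\Im(a_{0})\ne 0$, and setting $m=1$ in \eqref{eq:k-delays-cond2} produces the second branch $a_{0}+|a_{1}|\le 0$ together with $a_{0}+a_{1}\ne 0$ for $\Im(a_{0})=0$. So at this level the result is a corollary of a corollary, and no new argument is strictly required.

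It is nevertheless instructive to re-derive \eqref{eq:cond-a2} directly, in order to expose the geometric picture of Section~\ref{subsec:Absolute-stability-and}. For this I would verify the three conditions of Lemma~\ref{thm:1delay-2}. Conditions (A) and (B) reduce immediately to $\Re(a_{0})<0$ and $a_{0}+a_{1}\ne 0$, while condition (C), being a scalar spectral-radius inequality, is equivalent to
\[
|a_{1}|^{2}<(\omega-\Im(a_{0}))^{2}+(\Re(a_{0}))^{2} \qquad \text{for all } \omega\ne 0,
\]
which is precisely the statement $\gamma(\omega)<0$ restricted to $\omega\ne 0$ in the notation of \eqref{eq:maxgamma}.

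I would then split by whether $\Im(a_{0})$ vanishes, driven by the location of the minimum of the right-hand side. If $\Im(a_{0})\ne 0$ the minimum $(\Re(a_{0}))^{2}$ is attained at the admissible frequency $\omega=\Im(a_{0})$, so (C) forces the strict bound $|a_{1}|<|\Re(a_{0})|$; combined with (A) this yields $\Re(a_{0})+|a_{1}|<0$, and (B) becomes automatic since $|a_{0}+a_{1}|\ge |a_{0}|-|a_{1}|\ge |\Re(a_{0})|-|a_{1}|>0$. If instead $\Im(a_{0})=0$, the minimum would be attained only at the forbidden point $\omega=0$, so the best bound one can extract from (C) is the non-strict inequality $|a_{1}|\le |a_{0}|$, i.e.\ $a_{0}+|a_{1}|\le 0$; equality is now genuinely possible, so the nonsingularity condition $a_{0}+a_{1}\ne 0$ has to be imposed separately.

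The only subtle point --- and in my view the only step worth thinking about --- is the behaviour at $\omega=0$, where purely imaginary characteristic roots are admissible under the no-resonance condition (A1.3) but a genuine zero root is still forbidden by the nonsingular-$S(0)$ condition (A1.2). Keeping the two roles of the origin apart is what dictates the strict-versus-non-strict split between the two branches of \eqref{eq:cond-a2} and forces the separate clause $a_{0}+a_{1}\ne 0$ in the real case; the remainder is straightforward bookkeeping.
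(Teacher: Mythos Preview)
Your proposal is correct and your primary approach---specialising Corollary~\ref{thm:Scalar} to $m=1$---is exactly what the paper does; the paper offers no separate proof beyond the sentence ``follows from Theorem~\ref{thm:Scalar}''. Your additional direct re-derivation via Lemma~\ref{thm:1delay-2} is extra content not in the paper, but it is accurate and nicely ties the scalar condition to the asymptotic-spectrum picture of Section~\ref{subsec:Absolute-stability-and}.
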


It is easy to see that the conditions of the Corollary~\ref{cor:scalar-onedelay}
imply the stability of the asymptotic spectrum. The asymptotic continuous
spectrum is allowed to touch the imaginary axis at the origin, and
this is the case when $a_{0}+\left|a_{1}\right|=0$, however, the
additional condition $a_{0}+a_{2}\ne0$ forbids the appearance of
the trivial eigenvalue.

Finally, we notice that the asymptotic continuous spectrum crosses
the imaginary axis at the points 
\[
\omega_{H}=\Im(a_{1})\pm\sqrt{\left|a_{2}\right|^{2}-\left(\Re(a_{1})\right)^{2}}
\]
in the unstable case. The values $\omega_{H}$ are possible frequencies
of the Hopf bifurcations in corresponding nonlinear systems.

\begin{figure}
\includegraphics[width=1\textwidth]{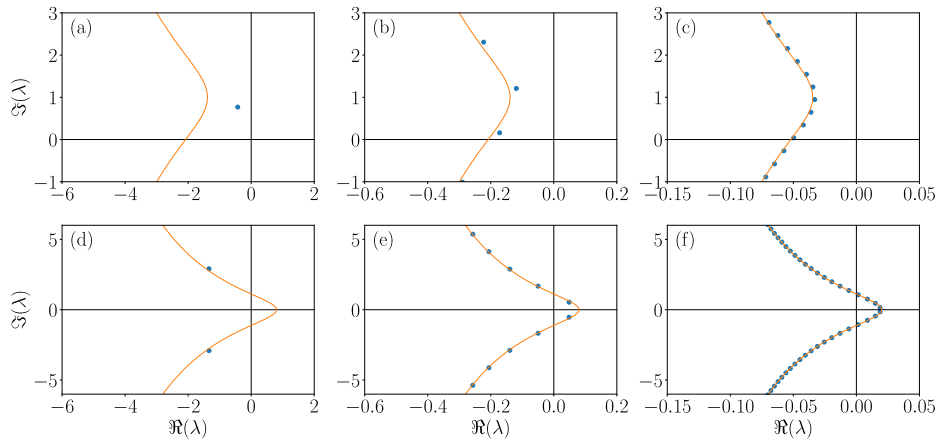}\caption{\label{fig:Spectrum-scalar} Spectrum (blue points) and the asymptotic
continuous spectrum (\ref{eq:AS}) (orange lines) of the scalar system
(\ref{eq:scalar}). The upper panels (a-c) correspond to an absolutely
stable case for the parameter values $a_{0}=-1+i$ and $a_{1}=0.5$.
Time-delay is increasing from (a) to (c): $\tau=0.5$ (a), $\tau=5$
(b), and $\tau=20$ (c). Similarly, the lower panels (d-f) illustrate
a case without absolute stability for the parameter values $a_{0}=-1$
and $a_{1}=-1.5$. Time-delays are: $\tau=0.5$ (d); $\tau=5$ (e),
and $\tau=20$ (f).}
\end{figure}

\subsection{Discussion of Corollary \ref{cor:singular}}

\label{sec:singular} 

Here we explain the physical meaning of the extended singular map
\eqref{eq:map-omega}, which appears in Corollary \ref{cor:singular}
and determines the absolute stability. According to the corollary
assumptions, it must be exponentially stable for all $\omega\ne0$
and any $\varphi\in\mathbb{T}$. The map (\ref{eq:map-omega}) can
be obtained form the single-delay DDE by substituting $x(t)=y(t)e^{i\omega t/\varepsilon}$,
$\varphi=$$\omega/\varepsilon$, and formally neglecting the term
$\varepsilon\dot{y}$. From the physical point of view, equation (\ref{eq:map-omega})
regulates the amplification or damping of rapid oscillations with
frequency $\omega/\varepsilon$. By rescaling the time back to the
original form, these are frequencies $\omega$. 

\section{Multiple delays}

\label{multipleDelays}

\subsection{Equivalence of absolute stability and asymptotic stability for hierarchically
large delays}

In this section we show that the criterium for the absolute stability
for arbitrary positive delays is equivalent to the stability for hierarchically
large time-delays, i.e., the asymptotic stability for $1\ll\tau_{1}\ll\cdots\ll\tau_{m}$.
Such an equivalence is a generalization of Corollary~\ref{thm:absolut-vs-large}
for one large delay. Interestingly, due to the symmetry of the conditions
for the absolute stability with respect to the numbering of the delays,
the order $\tau_{k}$ in this case does not play any role.

For the proof, we will need several Lemmas.
\begin{lem}
\label{lem:new-lemma-2} Let $A\in\mathbb{C}^{n\times n}$ and $i\omega_{0}\in\sigma(A)$.
Then, for any $B\in\mathbb{C}^{n\times n}$, one of the following
two mutually exclusive cases occurs: \\
I. There exist $\tilde{\omega}\ne0$ and $\tilde{\varphi}\in\mathbb{T}$
such that $i\tilde{\omega}\in\sigma\left(A+Be^{i\tilde{\varphi}}\right)$.
\\
II. $\omega_{0}=0$ and $0\in\sigma\left(A+Be^{i\varphi}\right)$
for all $\varphi\in\mathbb{T}$.
\end{lem}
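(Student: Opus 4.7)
The strategy is to mirror the branch-tracking argument of Lemma~\ref{lem:new-lemma-1}, but anchored at the given purely imaginary eigenvalue of $A$ rather than at a violation of the Hurwitz condition. Set $Q(\lambda, z) := \det(\lambda I - A - zB)$; the hypothesis reads $Q(i\omega_0, 0) = 0$, and the goal is either to produce a solution of $Q(i\tilde\omega, e^{i\tilde\varphi}) = 0$ with $\tilde\omega \ne 0$ (case~I), or else to conclude $\omega_0 = 0$ together with $Q(0, e^{i\varphi}) = 0$ for every $\varphi$ (case~II).

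First I would dispose of the degenerate situation in which $Q(i\omega_0, z)$ vanishes identically as a polynomial in $z$: then $i\omega_0 \in \sigma(A + zB)$ for every $z \in \mathbb{C}$, and taking $z = e^{i\varphi}$ immediately gives case~I if $\omega_0 \ne 0$ (with $\tilde\omega = \omega_0$ and arbitrary $\tilde\varphi$), and case~II if $\omega_0 = 0$.

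In the remaining non-degenerate case, $Q(i\omega_0, z)$ is a nontrivial polynomial in $z$ with $z = 0$ as one of its finitely many roots, so I would follow a continuous Puiseux branch $z(\omega)$ of the real-algebraic curve $\Gamma := \{(\omega, z) \in \mathbb{R} \times \mathbb{C} : Q(i\omega, z) = 0\}$ emanating from $(\omega_0, 0)$. The key quantitative input comes from writing $Q(i\omega, z) = \sum_{k=0}^{d} a_k(\omega) z^k$ with $a_0(\omega) = \det(i\omega I - A)$ and observing
\[
\prod_{j=1}^{d} z_j(\omega) \;=\; (-1)^{d}\,\frac{\det(i\omega I - A)}{a_d(\omega)},
\]
where $a_d(\omega)$ has degree at most $n - d$ in $\omega$ while the numerator grows like $|\omega|^n$. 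Hence at least one branch $z_j(\omega)$ escapes to infinity as $|\omega|\to\infty$, and by connectedness of $\Gamma$ across its algebraic branch points, the component containing $(\omega_0, 0)$ accesses points with $|z|$ arbitrarily large. The intermediate value theorem applied to $|z|$ along this component then yields a crossing $(\tilde\omega, z_\ast) \in \Gamma$ with $|z_\ast| = 1$; setting $\tilde\varphi := \arg z_\ast$ produces $i\tilde\omega \in \sigma(A + Be^{i\tilde\varphi})$.

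The delicate step is to ensure $\tilde\omega \ne 0$. If $\omega_0 \ne 0$, local continuity of the branch forces the first crossing of $|z| = 1$ to occur at some $\tilde\omega$ near $\omega_0$, hence nonzero, yielding case~I. If $\omega_0 = 0$, I would follow the branch separately in the directions $\omega > 0$ and $\omega < 0$, obtaining case~I whenever either direction produces a crossing at $\tilde\omega \ne 0$. The only remaining scenario is that every accessible unit-cylinder crossing along the branch is pinned to $\omega = 0$; here I expect the algebraic rigidity of $\Gamma$ (in particular the finiteness of $\{z : Q(0, z) = 0\}$ unless $Q(0, z) \equiv 0$) to force $Q(0, z) \equiv 0$, which combined with $\omega_0 = 0$ is precisely case~II. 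Making this last dichotomy rigorous — ruling out pathological branch-point configurations of $\Gamma$ near $\omega = 0$ — is the main technical obstacle, and will likely require a Newton-polygon or Puiseux-series analysis of the algebraic curve at that point.
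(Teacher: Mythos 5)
Your proposal follows essentially the same route as the paper: split on whether $Q(i\omega_0,z)$ is trivial as a polynomial in $z$, and in the nontrivial case continue a root branch $z(\omega)$ from $(\omega_0,0)$ until it crosses the unit circle, reading off $\tilde\varphi=\arg z(\tilde\omega)$. Two remarks. First, your product-of-roots estimate only shows that \emph{some} branch escapes to infinity, not that the branch through $(\omega_0,0)$ does; the argument you actually want (and the one the paper implicitly uses) is the uniform statement that for any $C>0$ and all sufficiently large $|\omega|$, \emph{every} root $z$ of $Q(i\omega,\cdot)=0$ satisfies $|z|>C$, because $\bigcup_{|z|\le C}\sigma(A+zB)$ is a bounded set and so cannot contain $i\omega$ for large $|\omega|$. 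With that, whichever way the branch from $(\omega_0,0)$ is continued, $|z|$ must eventually exceed $1$ (either as $|\omega|\to\infty$ or at a finite blow-up point), and the intermediate value theorem gives the crossing. Second, the ``main technical obstacle'' you flag at the end is illusory: in the non-degenerate case, continuing the branch into $\omega>\omega_0$ and into $\omega<\omega_0$ produces crossings $\tilde\omega_1\in(\omega_0,\infty)$ and $\tilde\omega_2\in(-\infty,\omega_0)$, which cannot both vanish, so at least one is a genuine case~I resonance; in particular when $\omega_0=0$ both are automatically nonzero. The scenario in which every crossing is ``pinned to $\omega=0$'' cannot arise, and would anyway force $Q(0,z)\equiv0$, contradicting the standing non-degeneracy assumption of that branch of the proof --- no Newton-polygon or Puiseux analysis is required.
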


\begin{proof}
Consider the function 
\[
Q(i\omega,z)=\det\left(-i\omega\text{I}+A+zB\right).
\]
The Lemma's assumption implies $Q(i\omega_{0},0)=0$. Two cases are
possible:\\
1. The polynomial $Q(i\omega_{0},z)$ does not depend on $z$. In
such a case, for arbitrary $z$, we have $Q(i\omega_{0},z)=Q(i\omega_{0},0)=0$.
In particular, it holds $Q(i\omega_{0},e^{i\varphi})=0$, hence, $i\omega_{0}\in\sigma\left(A+Be^{i\varphi}\right)$
for all $\varphi$. If $\omega_{0}=0$, then the case II is realized.
For $\omega_{0}\ne0$, the case I is realized. \\
2) The polynomial $Q(i\omega_{0},z)$ depends non-trivially on $z$.
Then, there exists a branch of complex roots $z(\omega)$ solving
$Q(i\omega,z(\omega))=0$, which depends continuously on $\omega$,
and $z(\omega_{0})=0$. Moreover, it holds $\left|z(\omega)\right|\to\infty$
as $\left|\omega\right|\to\infty$. Due to continuity, there exist
$\tilde{\omega}_{1}\in(\omega_{0},\infty)$ and $\tilde{\omega}_{2}\in(-\infty,\omega_{0})$
such that $\left|z(\tilde{\omega}_{1,2})\right|=1.$ Hence, we obtain
$i\tilde{\omega}_{1,2}\in\sigma\left(A+Be^{i\tilde{\varphi}_{1,2}}\right)$
with $\tilde{\varphi}_{1,2}=\arg(z(\tilde{\omega}_{1,2}))$. Since
the two values $\tilde{\omega}_{1,2}$ cannot be zero simultaneously,
we obtain the case I of the Lemma. 
\end{proof}
\begin{lem}
\label{lem:new-lemma-many}Let $A_{0}\in\mathbb{C}^{n\times n}$ be
Hurwitz and $A_{k}\in\mathbb{C}^{n\times n}$, $k=1,\dots,m$. Then,
one of the following three mutually exclusive cases occurs: \\
I. $S(\text{\ensuremath{\Phi}})$ is Hurwitz for all $\Phi\in\mathbb{T}^{m}$;\\
II. There exist $\tilde{\omega}\ne0$ and $\tilde{\Phi}\in\mathbb{T}^{m}$
such that $i\tilde{\omega}\in\sigma\left(S(\tilde{\text{\ensuremath{\Phi}}})\right)$;\\
III. There exists a nonempty set $\mathcal{T}_{0}\subset\mathbb{T}^{m}$,
$\mathcal{T}_{0}\ne\mathbb{T}^{m}$, such that $0\in\sigma\left(S(\text{\ensuremath{\Phi}})\right)$
for $\Phi\in\mathcal{T}_{0}$, and $S(\text{\ensuremath{\Phi}})$
is Hurwitz for all $\Phi\in\mathbb{T}^{m}\setminus\mathcal{T}_{0}$. 
\end{lem}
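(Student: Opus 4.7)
The plan is to reduce the multi-delay trichotomy to the single-delay Lemma \ref{lem:new-lemma-1} via a \emph{diagonal rotation} observation: for any fixed $\Phi_0=(\varphi_{1,0},\dots,\varphi_{m,0})\in\mathbb{T}^m$, setting $B_{\Phi_0}:=\sum_{k=1}^m A_k e^{i\varphi_{k,0}}$ gives
\[
A_0+B_{\Phi_0}e^{i\psi}=A_0+\sum_{k=1}^m A_k e^{i(\varphi_{k,0}+\psi)}=S(\Phi_0+\psi\mathbf{1}),
\]
where $\mathbf{1}:=(1,\dots,1)$. Hence the one-parameter family appearing in Lemma \ref{lem:new-lemma-1} is nothing but the restriction of $S(\cdot)$ to the diagonal line through $\Phi_0$ in $\mathbb{T}^m$, and we may apply Lemma \ref{lem:new-lemma-1} with $A=A_0$ (Hurwitz by assumption) and $B=B_{\Phi_0}$.

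Assuming case I of the present lemma fails, pick $\Phi_0$ with $S(\Phi_0)$ not Hurwitz. Then Lemma \ref{lem:new-lemma-1} applied to $(A_0,B_{\Phi_0})$ must land in its case II or case III. If its case II holds, there exist $\tilde\omega\neq 0$ and $\tilde\psi$ with $i\tilde\omega\in\sigma(S(\Phi_0+\tilde\psi\mathbf{1}))$, which is case II of the present lemma. Otherwise its case III holds, producing isolated $\tilde\psi_j$ with $0\in\sigma(S(\Phi_0+\tilde\psi_j\mathbf{1}))$, so the natural candidate
\[
\mathcal{T}_0:=\{\Phi\in\mathbb{T}^m:0\in\sigma(S(\Phi))\}
\]
is nonempty.

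Suppose additionally that case II of the present lemma fails, i.e.\ $i\omega\notin\sigma(S(\Phi))$ for every $\omega\neq 0$ and every $\Phi$. I would then verify case III. Properness $\mathcal{T}_0\neq\mathbb{T}^m$ follows from the fact that $\det S(\Phi_0+\psi\mathbf{1})=\det(A_0+e^{i\psi}B_{\Phi_0})$ is, as a polynomial in $z=e^{i\psi}$, nontrivial with constant term $\det A_0\neq 0$, hence has only finitely many zeros on $|z|=1$. To establish that $S(\Phi)$ is Hurwitz for every $\Phi\notin\mathcal{T}_0$, I would apply Lemma \ref{lem:new-lemma-1} once more to $(A_0,B_\Phi)$: its case II would contradict the standing assumption; its case III would yield isolated $\tilde\psi_j$ with $0\in\sigma(S(\Phi+\tilde\psi_j\mathbf{1}))$, and then either some $\tilde\psi_j\equiv 0\pmod{2\pi}$ (forcing $\Phi\in\mathcal{T}_0$, contradicting the choice of $\Phi$), or $\psi=0$ lies in the Hurwitz region of the one-parameter family, whence $S(\Phi)=A_0+B_\Phi$ is Hurwitz. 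The remaining case I of Lemma \ref{lem:new-lemma-1} also delivers Hurwitzness of $S(\Phi)$.

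The main obstacle is the bookkeeping in the last paragraph, where one must carefully rule out both case II and the ``bad branch'' of case III of Lemma \ref{lem:new-lemma-1} applied at an arbitrary $\Phi\notin\mathcal{T}_0$, using only the hypotheses on $(A_0,B_\Phi)$ and the global assumption that case II of the present lemma fails. Everything else---mutual exclusivity of cases I--III, nonemptiness and properness of $\mathcal{T}_0$---is a direct consequence of the Hurwitz property of $A_0$ combined with the diagonal rotation trick.
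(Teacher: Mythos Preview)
Your argument is correct and gives a genuinely different proof from the one in the paper. The paper proceeds by induction on the number of delay terms, applying Lemma~\ref{lem:new-lemma-1} and then Lemma~\ref{lem:new-lemma-2} successively to the partial sums $M_{r}=A_{0}+\sum_{k=1}^{r}A_{k}e^{i\varphi_{k}}$, where at each stage $M_{r-1}$ plays the role of $A$ and $A_{r}$ that of $B$; the various logical chains (Hurwitz passed forward, resonances and zero eigenvalues propagated via Lemma~\ref{lem:new-lemma-2}) are then enumerated exhaustively. Your diagonal-rotation trick $A_{0}+B_{\Phi}e^{i\psi}=S(\Phi+\psi\mathbf{1})$ collapses this induction into a single invocation of Lemma~\ref{lem:new-lemma-1} at each point $\Phi$, with the fixed Hurwitz matrix $A=A_{0}$; Lemma~\ref{lem:new-lemma-2} is never needed. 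This is both shorter and conceptually cleaner: the key structural reason the multi-delay trichotomy holds is that every $\Phi$ lies on a one-parameter diagonal along which the single-delay lemma already applies with the \emph{same} Hurwitz anchor $A_{0}$. The paper's inductive route, by contrast, changes the anchor matrix at each step (to $M_{r-1}$, which need not be Hurwitz) and must therefore invoke the separate propagation Lemma~\ref{lem:new-lemma-2}; this is more laborious but makes the intermediate partial-sum structure explicit, which is arguably useful for the hierarchical-delay interpretation elsewhere in the paper. Your argument for $\mathcal{T}_{0}\ne\mathbb{T}^{m}$ via the nonvanishing constant term $\det A_{0}$ of the polynomial $z\mapsto\det(A_{0}+zB_{\Phi_{0}})$ is also more transparent than the paper's one-line assertion that $0\in\sigma(S(\Phi))$ for all $\Phi$ would force $0\in\sigma(A_{0})$.
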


\begin{proof}
The proof follows from the consecutive application Lemmas~\ref{lem:new-lemma-1}
and \ref{lem:new-lemma-2} to the matrices 
\begin{equation}
M_{r}=A_{0}+\sum_{k=1}^{r}A_{k}e^{i\phi_{k}},r=0,...,m,\label{eq:matrices}
\end{equation}
where $M_{r-1}$, $r=1,..,m$, plays the role of $A$ and $A_{r}$
plays the role of $B$. Note that in this way Case I of Lemma \ref{lem:new-lemma-1}
transfers the Hurwitz property to the next level $r$, while Case
II of Lemma \ref{lem:new-lemma-1} provides a resonance, which is
then by Lemma \ref{lem:new-lemma-2} transfers to the next level.
Case III of Lemma \ref{lem:new-lemma-1} detects a zero eigenvalue,
which is transferred by Case II of Lemma \ref{lem:new-lemma-2}. By
considering all possible logical chains, wee see that I-III are the
only possibilities that can be realized.
\begin{itemize}
\item [\textbf{I:}]Case I of Lemma~\ref{lem:new-lemma-1} for all $r=1,\dots,m$.
In this case, all matrices are Hurwitz for all $\Phi$.
\item [\textbf{II:}]Case I of Lemma~\ref{lem:new-lemma-1}, followed by
Case II of Lemma~\ref{lem:new-lemma-1}, possibly followed by Case
I of Lemma~\ref{lem:new-lemma-2}. Here, we have $i\omega\in\sigma\left(M_{r}\right)$
for some $r\le m$, $\omega\ne0$. Then the sequential application
of Lemma~\ref{lem:new-lemma-2} $m-r$ times leads to $i\tilde{\omega}\in\sigma(S(\tilde{\Phi}))$
with $\tilde{\omega}\ne0$ and some $\tilde{\Phi}$. 
\item [\textbf{II:}]Case I of Lemma~\ref{lem:new-lemma-1}, followed by
Case III of Lemma~\ref{lem:new-lemma-1}, followed by Case I of Lemma~\ref{lem:new-lemma-2}.
Here, the matrix $M_{r}$ contains zero eigenvalue for some $\Phi$
and otherwise it is Hurwitz for all other $\Phi$. At some further
application of Lemma~\ref{lem:new-lemma-2} on some level $r_{1}>r$,
there appears a resonance $\omega\ne0$ such that $i\omega\in\sigma\left(A_{0}+\sum_{k=1}^{r_{1}}A_{k}e^{i\tilde{\varphi_{k}}}\right)$,
$r<r_{1}\le m$. Further application of Lemma~\ref{lem:new-lemma-2}
$m-r_{1}$ times leads to the statement II of this Lemma.
\item [\textbf{III:}]Case I of Lemma~\ref{lem:new-lemma-1}, followed
by Case III of Lemma~\ref{lem:new-lemma-1}, followed by Case II
of Lemma~\ref{lem:new-lemma-2}. Similarly to the previous case,
some matrix $M_{r}$ contains zero eigenvalue and otherwise it is
Hurwitz for all other $\Phi$. At some further applications of Lemma~\ref{lem:new-lemma-2},
only case II of Lemma~\ref{lem:new-lemma-2} is realized. We must
only show that $\mathcal{T}_{0}\ne\mathbb{T}^{m}$. Indeed, assuming
opposite, we have $0\in S(\Phi)$ for all $\Phi$, which implies $0\in A_{0}$
and contradicts the assumption of $A_{0}$ Hurwitz. 
\item [\textbf{II:}]Case I of Lemma~\ref{lem:new-lemma-1}, followed by
Case III of Lemma~\ref{lem:new-lemma-1}, followed by Case II of
Lemma~\ref{lem:new-lemma-2}, followed by Case I of Lemma~\ref{lem:new-lemma-2}.
This logical chain is similar to the previous one, with only difference
that the case I of Lemma~\ref{lem:new-lemma-2} is realized at some
later iteration.
\end{itemize}
\end{proof}
\begin{lem}
\label{lem:new-A0-iomega0}Let $A_{k}\in\mathbb{C}^{n\times n}$,
$k=1,\dots,m$, and $i\omega_{0}\in\sigma(A_{0})$. Then, one of the
following two mutually exclusive cases occurs:\\
I. There exist $\tilde{\omega}\ne0$ and $\tilde{\Phi}$ such that
$i\tilde{\omega}\in\sigma(S(\tilde{\Phi}))$;\\
II. $\omega_{0}=0$ and $0\in\sigma(S(\text{\ensuremath{\Phi}}))$
for all $\Phi\in\mathbb{T}^{m}$.
\end{lem}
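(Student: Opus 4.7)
The plan is to prove the lemma by induction on the number of delays $m$. The base case $m=1$ is exactly Lemma~\ref{lem:new-lemma-2}. The intuition is that Lemma~\ref{lem:new-lemma-2} provides the ``one-step'' dichotomy---a perturbed matrix either yields a nonzero purely imaginary resonance (Case~I) or preserves a zero eigenvalue uniformly in the perturbation phase (Case~II)---and I propagate this dichotomy through the sum $S(\Phi) = A_0 + \sum_{k=1}^{m} A_k e^{i\varphi_k}$ one matrix at a time.

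For the inductive step, assume the statement for $m-1$ and consider the $m$-matrix family. I first apply Lemma~\ref{lem:new-lemma-2} with $A = A_0$ (which has $i\omega_0 \in \sigma(A_0)$) and $B = A_1$. If Case~I of Lemma~\ref{lem:new-lemma-2} holds, there exist $\tilde\omega_1 \ne 0$ and $\tilde\varphi_1$ with $i\tilde\omega_1 \in \sigma(A_0 + A_1 e^{i\tilde\varphi_1})$. I then successively apply Lemma~\ref{lem:new-lemma-2} with $A = A_0 + \sum_{k=1}^{r-1} A_k e^{i\tilde\varphi_k}$ and $B = A_r$ for $r = 2, \dots, m$. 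At every such step the current distinguished eigenvalue is nonzero, so Case~II of Lemma~\ref{lem:new-lemma-2} (which forces the eigenvalue to be zero) is automatically excluded; only Case~I can occur, yielding at each stage a new nonzero resonance. The result is some $\tilde\Phi = (\tilde\varphi_1, \dots, \tilde\varphi_m)$ and $\tilde\omega \ne 0$ with $i\tilde\omega \in \sigma(S(\tilde\Phi))$, which is Case~I of the present lemma.

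If instead Case~II of Lemma~\ref{lem:new-lemma-2} holds at the first step, then $\omega_0 = 0$ and $0 \in \sigma(A_0 + A_1 e^{i\varphi_1})$ for every $\varphi_1 \in \mathbb{T}$. For each such $\varphi_1$ I apply the inductive hypothesis with the matrix $A_0 + A_1 e^{i\varphi_1}$ playing the role of $A_0$ (with $0$ in its spectrum) and with perturbations $A_2, \dots, A_m$. This yields one of two alternatives: either there exist $\tilde\omega \ne 0$ and phases $\tilde\varphi_2, \dots, \tilde\varphi_m$ with $i\tilde\omega \in \sigma(S(\varphi_1,\tilde\varphi_2,\dots,\tilde\varphi_m))$, in which case $\tilde\Phi=(\varphi_1,\tilde\varphi_2,\dots,\tilde\varphi_m)$ witnesses Case~I of the present lemma; or $0 \in \sigma(S(\varphi_1,\varphi_2,\dots,\varphi_m))$ for all $(\varphi_2,\dots,\varphi_m) \in \mathbb{T}^{m-1}$. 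If the first alternative occurs for \emph{some} $\varphi_1$, we are in Case~I. Otherwise the second alternative holds uniformly in $\varphi_1$, giving $0 \in \sigma(S(\Phi))$ for every $\Phi \in \mathbb{T}^m$, which, together with $\omega_0 = 0$, is precisely Case~II.

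The main obstacle I anticipate is the careful bookkeeping in the Case~II branch of the inductive step, since the inductive hypothesis must be applied pointwise in $\varphi_1$ and one needs the ``if for some $\varphi_1$ Case~I appears we win, otherwise the uniform Case~II persists'' logic to be unambiguous. The rest of the argument is a mechanical iteration of Lemma~\ref{lem:new-lemma-2}; the key structural observation is that a nonzero resonance, once produced, automatically rules out Case~II of Lemma~\ref{lem:new-lemma-2} at every subsequent application, so it propagates unchanged through the remaining perturbations. This also yields the mutual exclusivity: at the first stage where we detect a nonzero imaginary eigenvalue we land in Case~I, while failing to ever detect one means $0$ remains in the spectrum throughout, giving Case~II.
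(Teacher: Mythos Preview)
Your proof is correct and follows essentially the same approach as the paper, which simply states that the result follows from sequential application of Lemma~\ref{lem:new-lemma-2}; your write-up makes this iteration explicit and carefully handles the branching between Cases~I and~II at each step. (Your final remark about mutual exclusivity is not quite a proof---and indeed the two cases can overlap, e.g.\ when $A_0$ has both $0$ and a nonzero imaginary eigenvalue---but the paper's own proof does not address this either, and only the ``at least one case occurs'' direction is ever used.)
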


\begin{proof}
The proof follows from the sequential application of Lemma~\ref{lem:new-lemma-2}
in a similar way as above.
\end{proof}
\begin{lem}
[\textbf{Reappearance of resonances}]\label{lem:new-hierar-lemma}Let
$A_{k}\in\mathbb{C}^{n\times n}$, $k=0,\dots,m$, and $i\omega_{0}\in\sigma(S(\Phi))$,
$\omega_{0}\ne0$. Then, it holds 
\begin{equation}
\det\left[-i\omega_{0}\text{I}+A_{0}+\sum_{k=1}^{m}A_{k}e^{-i\omega_{0}\tau_{k}}\right]=0\label{eq:cheq-omega0}
\end{equation}
with 
\begin{equation}
\tau_{k}=\frac{2\pi}{\omega_{0}}n_{k}-\frac{\varphi_{k}}{\omega_{0}},\quad n_{k}\in\mathbb{Z}.\label{eq:delays}
\end{equation}
That is, $i\omega_{0}$ solves the characteristic equation \eqref{eq:cheq-1}
for countably many time-delays \eqref{eq:delays}. \\
In particular, among these time-delays, one can choose the set $\left\{ \tau_{1},\dots,\tau_{m}\right\} $
of hierarchically large delays, which satisfy the condition \eqref{eq:delays-hier}
with arbitrary small $\varepsilon>0$. Such delays are hierarchically
ordered so that $\tau_{k}/\tau_{k+1}=\varepsilon(\nu_{k}/\nu_{k+1})$. 
\end{lem}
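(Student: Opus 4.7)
The plan is to prove the identity for the delays by direct algebraic matching of exponentials, and then to establish the hierarchical claim by a simple pigeonhole argument showing that the discrete lattice of admissible $\tau_k$'s intersects each prescribed interval.

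For the first assertion, the hypothesis $i\omega_0 \in \sigma(S(\Phi))$ is
\[
\det\!\Bigl[i\omega_0 I - A_0 - \sum_{k=1}^{m} A_k e^{i\varphi_k}\Bigr] = 0,
\]
while the characteristic equation \eqref{eq:cheq-1} evaluated at $\lambda = i\omega_0$ reads
\[
\det\!\Bigl[i\omega_0 I - A_0 - \sum_{k=1}^{m} A_k e^{-i\omega_0 \tau_k}\Bigr] = 0,
\]
which coincides with \eqref{eq:cheq-omega0} up to a sign $(-1)^n$. I would therefore force $e^{-i\omega_0 \tau_k} = e^{i\varphi_k}$ for each $k$, i.e.\ $\omega_0\tau_k \equiv -\varphi_k \pmod{2\pi}$; solving gives exactly \eqref{eq:delays}. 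Choosing $n_k$ with the appropriate sign (positive if $\omega_0>0$, negative if $\omega_0<0$) and sufficiently large absolute value produces nonnegative delays, and varying each $n_k$ independently yields a countable family, completing the first part.

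For the hierarchical part, I would fix a large integer $n_1$ (of the sign that renders $\tau_1\geq 0$) and set
\[
\varepsilon := \frac{|\omega_0|}{|2\pi n_1 - \varphi_1|}, \qquad \tau_1 = \varepsilon^{-1}.
\]
For each $k = 2,\dots,m$ the required condition $\tau_k = \nu_k \varepsilon^{-k}$ with $\nu_k \in [1, 1+\varepsilon^{k-1})$ is equivalent to $\tau_k$ lying in an interval $I_k$ of length $\varepsilon^{-1}$. The admissible values of $\tau_k$ from \eqref{eq:delays} form an arithmetic progression with common spacing $2\pi/|\omega_0|$. Once $n_1$ is taken large enough that $\varepsilon^{-1} > 2\pi/|\omega_0|$, each interval $I_k$ contains at least one point of the progression, fixing an admissible $n_k$. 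The choices for different $k$ are independent because the intervals $I_k$ live at different scales $\varepsilon^{-k}$ and each $n_k$ controls only the corresponding $\tau_k$. Letting $n_1 \to \infty$ drives $\varepsilon \to 0$, so delays satisfying \eqref{eq:delays-hier} exist for arbitrarily small $\varepsilon$, and the hierarchy $\tau_k/\tau_{k+1} = \varepsilon(\nu_k/\nu_{k+1})$ is immediate from the explicit form.

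I do not expect any genuine obstacle: the identity matching is mechanical, and the hierarchy reduces to a one-line pigeonhole observation (interval length exceeds lattice spacing). The only mildly delicate point is ensuring positivity of all $\tau_k$ simultaneously, which is handled uniformly by selecting the sign of each $n_k$ according to the sign of $\omega_0$; since the asymptotic size of each $\tau_k$ is $\varepsilon^{-k}$, which dominates the $O(1)$ shift by $\varphi_k/\omega_0$, this causes no difficulty for $\varepsilon$ small.
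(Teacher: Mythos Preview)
Your proposal is correct and follows essentially the same approach as the paper. The first part is identical (direct substitution to match $e^{-i\omega_0\tau_k}=e^{i\varphi_k}$), and for the hierarchical part the paper proceeds by the same idea you call pigeonhole: it writes $n_k=\varphi_k/(2\pi)+\omega_0\nu_k/(2\pi\varepsilon^{k})$ and observes that as $\nu_k$ ranges over an interval of length $2\pi\varepsilon^{k}/|\omega_0|$ the right-hand side changes by~$1$, hence hits an integer, then imposes $2\pi\varepsilon/|\omega_0|<1$ to land $\nu_k$ in $[1,1+\varepsilon^{k-1})$ --- which is exactly your condition $\varepsilon^{-1}>2\pi/|\omega_0|$ that the interval length exceeds the lattice spacing.
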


\begin{proof}
The fact that Eq.~\eqref{eq:cheq-omega0} holds for time-delays \eqref{eq:delays}
can be checked by substitution. 

Let us show that time delays can be chosen to be hierarchical, i.e.,
satisfy the condition \eqref{eq:delays-hier} with arbitrary small
$\varepsilon>0$. We denote 
\[
\varepsilon=\frac{1}{\tau_{1}}=\frac{\omega_{0}}{2\pi n_{1}-\varphi_{1}},
\]
which is a small parameter for sufficiently large $n_{1}$. We assume,
in particular, that $n_{1}\gg\omega_{0}$. Such a definition of $\varepsilon$
implies equality (\ref{eq:delays-hier}) for $k=1$.

Let us show that $n_{k}$, and, hence $\tau_{k}$, can be chosen in
such a way that (\ref{eq:delays-hier}) holds for some $\nu_{k}\in[1,1+\varepsilon^{k-1})$.
The equality 
\[
\tau_{k}=\frac{2\pi n_{k}-\varphi_{k}}{\omega_{0}}=\nu_{k}\varepsilon^{-k}
\]
leads to 
\begin{equation}
n_{k}=\frac{\varphi_{k}}{2\pi}+\frac{\omega_{0}\nu_{k}}{2\pi\varepsilon^{k}}.\label{eq:nk-1}
\end{equation}
By increasing $\nu_{k}$ from 1 to $1+\frac{2\pi\varepsilon^{k}}{\left|\omega_{0}\right|}$,
the value of $n_{k}$ in Eq.~(\ref{eq:nk-1}) changes by 1. Hence,
there exists such $\nu_{k}\in[1,1+\frac{2\pi\varepsilon^{k}}{\left|\omega_{0}\right|})$
that $n_{k}$ admits an integer value. Finally, by choosing $\varepsilon$
sufficiently small such that $\frac{2\pi\varepsilon}{\left|\omega_{0}\right|}<1$,
we obtain that $\nu_{k}\in[1,1+\varepsilon^{k-1})$.
\end{proof}
We remark that Lemma~\eqref{lem:new-hierar-lemma} generalizes some
of the statements shown for one delay in \cite{Yanchuk2009}.

\begin{proof}[Proof of Theorem \ref{thm:absolute-vs-hierar}]
It is clear that the absolute stability implies the stability for
hierarchically large time delays. Therefore, it remains to show that
conditions (A1.1) {[}instantaneous stability{]}, (A1.2) {[}nonsingular
$S(0)${]}, and (A1.3) {[}no resonance{]} of Theorem~\ref{thm:AS-general}
are necessary for the stability of the systems with hierarchically
large time delays \eqref{eq:delays-hier}. 

1. \emph{First, we show that }(A1.1) {[}instantaneous stability{]}\emph{
is necessary.} Assume the opposite, i.e., the condition (A1.1) of
Theorem \ref{thm:AS-general} does not hold. Then either $i\omega_{0}\in\sigma\left(A_{0}\right)$
or $\lambda_{0}\in\sigma\left(A_{0}\right)$ with $\Re(\lambda_{0})>0$. 

1a: Consider the case $i\omega_{0}\in\sigma\left(A_{0}\right)$. Then,
Lemma~\ref{lem:new-A0-iomega0} implies that one of the two cases
can occur: 

1aa: $i\tilde{\omega}\in\sigma(S(\tilde{\Phi}))$ with some $\tilde{\omega}\ne0$.
In such a case, Lemma~\ref{lem:new-hierar-lemma} implies that $i\tilde{\omega}$
is a solution of the characteristic equation for hierarchically large
time delays (\ref{eq:delays-hier}). We obtain the contradiction to
the absolute stability and, hence, (A1.1) holds.

1ab: $\omega_{0}=0$ and $0\in\sigma(S(\Phi))$ for all $\Phi\in\mathbb{T}^{m}$.
In particular, it holds $0\in\sigma(S(0))$, which means that $\lambda=0$
is an eigenvalue for arbitrary time-delays. This contradicts the absolute
stability assumption for hierarchically large delays, hence, (A1.1)
holds.

1b: Consider the case $\lambda_{0}\in\sigma\left(A_{0}\right)$ with
$\Re(\lambda_{0})>0$. Let $\tau_{k}=\nu_{k}\varepsilon^{-k}$ be
hierarchically large delays, and the corresponding characteristic
equation 
\begin{equation}
P_{m}(\lambda)=\det\left[-\lambda\text{I}+A_{0}+\sum_{k=1}^{m}A_{k}e^{-\lambda\nu_{k}\varepsilon^{-k}}\right]=0.\label{eq:cheqhi}
\end{equation}
Let $U(\lambda_{0})$ be a sufficiently small open neighborhood of
$\lambda_{0}$ such that it does not contain other eigenvalues of
$A_{0}$, and $\Re(U(\lambda_{0}))>0$. Then, the holomorphic function
$P_{m}(\lambda)$ converges uniformly to $\det\left[-\lambda\text{I}+A_{0}\right]$
for $\varepsilon\to0$. According to the Hurwitz theorem, the characteristic
equation (\ref{eq:cheqhi}) has an unstable root in $\lambda\in U(\lambda_{0})$
for all sufficiently small $\varepsilon$. This contradicts the asymptotic
stability assumption for hierarchically large delays, and, hence,
(A1.1) holds.

2. \emph{We show that }(A1.2) {[}nonsingular $S(0)${]}\emph{ is necessary.}
Assume that the condition (A1.2) of Theorem \ref{thm:AS-general}
does not hold. Then $0\in\sigma(S(0))$ and, hence the characteristic
root $\lambda=0$ solves Eq.~(\ref{eq:cheq-1}) for all delays. This
contradicts the asymptotic stability assumption for hierarchically
large delays, and, hence, (A1.2) holds.

3. \emph{We show that }(A1.3) {[}no resonance{]}\emph{ is necessary
for the stability of systems with hierarchically large time delays.}
Assume (A1.3) does not hold. Then there exists 
\[
i\omega_{0}\in\sigma\left(S(\Phi)\right),\quad\omega_{0}\ne0.
\]
Lemma~\ref{lem:new-hierar-lemma} implies that there are hierarchically
large time delays, for which there exists the eigenvalue $i\omega_{0}$.
This contradicts the asymptotic stability assumption and, hence, (A1.3)
holds.
\end{proof}

\subsection{Asymptotic spectrum for multiple hierarchically large delays and
its relation to the conditions for absolute stability}

Let us briefly review some concepts for the spectrum of systems with
hierarchically large time-delays $\tau_{k}=\nu_{k}\varepsilon^{-k}$
from \cite{Ruschel2021}. This spectrum can be generically divided
into $m+1$ parts corresponding to different timescales:

(i) The \emph{strongly unstable} part $\mathcal{S}_{\text{su}}$,
which is approximated by the unstable spectrum of $A_{0}$, i.e. $\sigma(A_{0})$
with $\Re\left(A_{0}\right)>0$.

(ii) The asymptotic continuous spectrum on different timescales can
be described by the following sets 
\begin{equation}
B_{k,j}=\left\{ z\in\mathbb{C}:-\ln\left|Y_{k,j}(\omega,\varphi_{1},\dots,\varphi_{k-1})\right|\varepsilon^{k}+i\omega,\,\,\omega\in\mathbb{R}\right\} ,\label{eq:spectralmanifolds}
\end{equation}
where $k=1,\dots,m$. The functions $Y_{k,j}(\omega,\varphi_{1},\dots,\varphi_{k-1})$
are the $j$-th roots of the \emph{spectral polynomial 
\begin{equation}
P_{k}(\omega,\varphi_{1},\dots,\varphi_{k-1},Y)=\det\left[i\omega\cdot\mathrm{I}-A_{0}-\sum_{l=1}^{k-1}A_{l}e^{i\varphi_{j}}-A_{k}Y\right],\label{eq:SpPo-1-1}
\end{equation}
}where the index $j$ numbers the roots. The sets $B_{k,j}$ correspond
to the eigenvalues with the real parts converging to zero as $\varepsilon^{k}$.
For $m=1$, the the sets $B_{1,j}$ contain the asymptotic continuous
spectrum of systems with one large delay $\tau_{1}$.

In the non-degenerate case of $\det A_{m}\ne0$, the asymptotic spectrum
has the form 
\[
\mathcal{S}_{\text{su}}\bigcup\Biggl[\bigcup_{\substack{k=1,\dots,m-1\\
j=1,\dots,\text{rank}A_{k}
}
}B_{k,j}^{+}\Biggr]\bigcup\Biggl[\bigcup_{j=1}^{\text{rank}A_{m}}B_{m,j}\Biggr],
\]
where $B_{k.j}^{+}=B_{k,j}\bigcap\left\{ z:\,\Re z>0\right\} $. That
is, for all spectral components that correspond to the convergence
of real parts as $\mathcal{O}(\varepsilon^{k})$ with $k=1,\dots,m-1$,
only the unstable part is included. The stable part of the asymptotic
continuous spectrum can contain only $B_{m,j}$, which has the slowest
convergence $\varepsilon^{m}$ of the real parts to zero. This implies
that the destabilization of the system with hierarchical delays with
$\det A_{m}\ne0$ can occur only due to some $B_{m,j}$ spectral component,
which is caused by the largest delay $\tau_{m}$. In a degenerate
case of $\det A_{m}=0$, stable parts of other spectral components
may appear as well, see more details in \cite{Lichtner2011,Yanchuk2014,Yanchuk2017,Ruschel2021}.

Taking into account different part of the asymptotic spectra, we can
interpret the role of the conditions of Theorems~\ref{thm:AS-general}
and \ref{thm:AS-general-1} for the spectrum of systems with hierarchical
time delays. Condition (A1.1) {[}instantaneous stability{]} guarantees
the absence of the strongly unstable spectrum. Condition (A1.2) {[}nonsingular
$S(0)${]} guarantees the absence of the zero eigenvalue. Conditions
(A1.3) {[}no resonance{]} and (A2.2) {[}almost Hurwitz S($\Phi$){]}
guarantee that the asymptotic continuous spectrum is stable and do
not cross the imaginary axis. 

\subsection{Illustration in the case of two delays\label{subsec:Illustration-in-the}}

Figure~\ref{fig:Spectrum-scalar-1} illustrates the spectrum of the
scalar DDE with two delays 
\begin{equation}
\dot{x}(t)=a_{0}x(t)+a_{1}x(t-\tau_{1})+a_{2}x(t-\tau_{2}).\label{eq:scalar-1}
\end{equation}
In particular, Figs.~\ref{fig:Spectrum-scalar-1}(a)-(c) show an
absolutely stable case for different values of time-delays. With the
increasing of the delays, the spectrum fills certain regions of the
complex plane but stays stable. Figs.~\ref{fig:Spectrum-scalar-1}(d)-(f)
illustrate the case without absolute stability. One can observes a
stability for small delays and destabilization with the increasing
of the delays.

\begin{figure}
\includegraphics[width=1\textwidth]{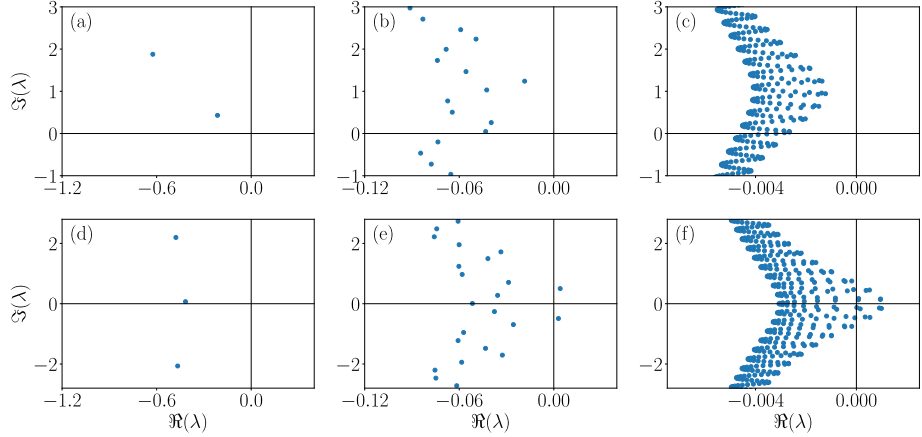}\caption{\label{fig:Spectrum-scalar-1} Spectrum (blue points) of the scalar
system (\ref{eq:scalar-1}) with two delays. The upper panel (a-c)
corresponds to an absolutely stable case for the parameter values
$a_{0}=-1+i$, $a_{1}=0.5$, and $a_{2}=0.3$. Time-delay is increasing
from (a) to (c): $\tau_{1}=0.5$, $\tau_{2}=2.5$ (a), $\tau_{1}=5$,
$\tau_{2}=25$ (b), and $\tau_{1}=20$, $\tau_{2}=400$ (c). The lower
panel (d-f) illustrates the case without absolute stability for $a_{0}=-1$,
$a_{1}=-0.7$, and $a_{2}=0.5+0.1i$. Time-delays are: $\tau_{1}=0.5$,
$\tau_{2}=2.5$ (d), $\tau_{1}=5$, $\tau_{2}=25$ (e), and $\tau_{1}=20$,
$\tau_{2}=400$ (f).}
\end{figure}

\section{Proof of Theorems~\ref{thm:AS-general} and \ref{thm:AS-general-1}
\label{sec:Proofs}}
\begin{lem}
\label{lem:last}Let $A\in\mathbb{C}^{n\times n}$ be not Hurwitz
and $0\not\in\sigma(A)$. Then, for any $B\in\mathbb{C}^{n\times n}$
, there exists $\varphi\in\mathbb{T}$ such that the matrix $A+Be^{i\varphi}$
is not Hurwitz and $\lambda\in\sigma(A+Be^{i\varphi})$ with $\lambda\ne0$
and $\Re(\lambda)\ge0$.
\end{lem}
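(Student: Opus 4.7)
The plan is to apply the argument principle to the two-variable polynomial
\[
Q(\lambda,z)=\det(-\lambda I+A+zB),
\]
viewed as a polynomial in $z$ with $\lambda$ as parameter. Since $A$ is not Hurwitz, pick $\lambda_{0}\in\sigma(A)$ with $\Re(\lambda_{0})\ge 0$; because $0\notin\sigma(A)$, this $\lambda_{0}$ is nonzero. Finding the required $\varphi$ is equivalent to producing a point on the algebraic set $\{Q=0\}$ with $|z|=1$, $\Re(\lambda)\ge 0$, and $\lambda\ne 0$; then $\varphi=\arg z$ completes the proof.

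Let $U=\{\lambda\in\mathbb{C}:Q(\lambda,z)\ne 0 \text{ for every } z \text{ on the unit circle}\}$, which is the complement of $\bigcup_{\varphi\in\mathbb{T}}\sigma(A+Be^{i\varphi})$, and for $\lambda\in U$ let $N(\lambda)$ denote the number of roots of $Q(\lambda,\cdot)$ inside the open unit disk, counted with multiplicity. By the argument principle, $N(\lambda)$ equals a contour integral of $Q_{z}/Q$ over $|z|=1$, so it depends continuously on $\lambda\in U$ and, being integer-valued, is locally constant on $U$. If $\lambda_{0}\notin U$ we are already done, so assume $\lambda_{0}\in U$. The identity $\det(-\lambda_{0}I+A)=0$ shows that $z=0$ is a root of $Q(\lambda_{0},\cdot)$, hence $N(\lambda_{0})\ge 1$.

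Follow the ray $\lambda(t)=\lambda_{0}+t$, $t\in[0,\infty)$; since $\Re(\lambda_{0})\ge 0$ and $\lambda_{0}\ne 0$, it lies entirely in $\{\Re\ge 0\}\setminus\{0\}$. For $|\lambda(t)|>\|A\|+\|B\|$, no $z$ with $|z|\le 1$ can give $Q(\lambda(t),z)=0$, because such a $z$ would force $\lambda(t)\in\sigma(A+zB)$ and hence $|\lambda(t)|\le\|A\|+\|B\|$; thus $N(\lambda(t))=0$ and $\lambda(t)\in U$ for all large $t$. By local constancy of $N$, the ray cannot remain inside $U$ throughout, so at some first value $t^{*}\ge 0$ the ray leaves $U$, meaning there exists $z$ with $|z|=1$ and $Q(\lambda(t^{*}),z)=0$. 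Setting $\varphi=\arg z$ yields $\lambda(t^{*})\in\sigma(A+Be^{i\varphi})$ with $\Re(\lambda(t^{*}))\ge 0$ and $\lambda(t^{*})\ne 0$, as required.

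The one subtlety I would handle carefully is the local constancy of $N$: the leading coefficient of $Q(\lambda,\cdot)$ in $z$ is itself a polynomial in $\lambda$ and can vanish at exceptional $\lambda$, but such degenerations merely send a root to infinity without crossing the unit circle, so the contour integral defining $N$ stays continuous on $U$. The hypothesis $0\notin\sigma(A)$ enters precisely to guarantee $\lambda_{0}\ne 0$, which in turn keeps the ray $\lambda_{0}+t$ away from the origin.
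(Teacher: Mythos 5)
Your proof is correct, and it takes a genuinely different route from the paper. The paper splits into cases: for a purely imaginary eigenvalue $i\omega_{0}\neq 0$ of $A$ it invokes its Lemma~\ref{lem:new-lemma-2}, and for an eigenvalue with positive real part it tracks a ``continuous branch of roots'' $\lambda(z)$ from $z=0$ outward, with further sub-cases according to whether the branch reaches the imaginary axis at $|\hat z|=1$ or $|\hat z|<1$ (in the latter sub-case switching to a branch $z(\omega)$ along the imaginary axis). You instead introduce the integer-valued zero-counting function $N(\lambda)$ via the argument principle and run a single connectedness argument along the ray $\lambda_{0}+t$: since $N(\lambda_{0})\ge 1$ (because $z=0$ is a root) while $N=0$ for $|\lambda|>\|A\|+\|B\|$, the ray must exit $U$, producing a unit-modulus root. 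This buys two things: a unified treatment of the $\Re(\lambda_{0})=0$ and $\Re(\lambda_{0})>0$ cases with no sub-case analysis, and a more robust justification of the continuity step --- the local constancy of the contour integral $\frac{1}{2\pi i}\oint_{|z|=1}Q_{z}/Q\,dz$ on the open set $U$ is airtight (and, as you note, insensitive to roots escaping to infinity when the degree in $z$ drops), whereas the existence of globally continuous single-root branches, which the paper leans on repeatedly, is a more delicate matter. The small details you would need to spell out --- that $Q(\lambda,\cdot)\not\equiv 0$ for $\lambda\in U$ (else $\lambda\notin U$), that $U$ is open, and that the ray $\lambda_{0}+t$ avoids the origin because $0\notin\sigma(A)$ forces $\lambda_{0}\neq 0$ --- are all easily supplied and are in fact already indicated in your write-up.
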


\begin{proof}
1. Consider first the case $i\omega_{0}\in\sigma(A)$, $\omega_{0}\ne0$.
Then Lemma \ref{lem:new-lemma-2} implies that $i\tilde{\omega}\in A+Be^{i\tilde{\varphi}}$,
$\tilde{\omega}\ne0$ for some $\tilde{\varphi}$. Thus, the statement
of the Lemma follows with $\lambda=i\tilde{\omega}$ and $\varphi=\tilde{\varphi}$. 

2. Let $\lambda\in\sigma(A)$ with $\Re(\lambda)>0$. The following
proof uses similar ideas as in the proof of Lemma \ref{lem:new-lemma-1}.
Consider the function 
\[
Q(\lambda,z)=\det\left(-\lambda\text{I}+A+zB\right).
\]
As a polynomial in $\lambda$, it possesses a continuous branch of
roots $\lambda(z)$ such that $\Re(\lambda(0))>0$. Due to continuity
of $\lambda(z)$, two cases are possible:

2a. $\Re(\lambda(z))>0$ for all $z$ with $\left|z\right|\le1$.
In this case, taking $z=e^{i\varphi}$, we obtain that $A+Be^{i\varphi}$
contains an eigenvalue with $\Re(\lambda(z))>0$ for all $\varphi$. 

2b. There exists $\hat{z}$ such that $\lambda(\hat{z})=i\hat{\omega}$
and $\Re\left(\lambda(z)\right)>0$ for all $|z|<|\hat{z}|$. That
is, we obtain
\begin{align}
Q(i\hat{\omega},\hat{z}) & =\det\left(-i\hat{\omega}\text{I}+A+\hat{z}B\right)=0,\quad\left|\hat{z}\right|\le1,\label{eq:polyxxx-2}\\
 & \Re\left(\lambda(z)\right)>0\,\,\,\text{for\,\,all}\,\,|z|<|\hat{z}|.\label{eq:56-2}
\end{align}

2b-i. Consider the case $|\hat{z}|=1$. We denote $\hat{z}=e^{i\hat{\varphi}}$.
If $\hat{\omega}\ne0$, we obtain $i\hat{\omega}\in A+Be^{i\hat{\varphi}}$,
which is needed for the proof. If $\hat{\omega}\ne0$, we observe
that $\Re\left(\lambda(z)\right)\ge0$ for all $z=e^{i\varphi}$.
Moreover, the equality $\lambda(e^{i\varphi})=0$ cannot hold for
all $\varphi$, since, otherwise, $0\in\sigma\left(A+Be^{i\varphi}\right)$
for all $\varphi$, which is only possible for $0\in\sigma(A)$. Therefore,
there exists $\tilde{\varphi}$ such that $\lambda\in\sigma\left(A+Be^{i\tilde{\varphi}}\right)$
with $\lambda\ne0$ and $\Re(\lambda)\ge0$. 

2b-ii. In the case $\left|\hat{z}\right|<1$, consider the function
\[
Q(i\omega,z)=\det\left(-i\omega\text{I}+A+zB\right)
\]
as a polynomial in $z$. It is nontrivial in $z$ at $\omega=\hat{\omega}$
and $z=\hat{z}$, and there exists a continuous branch of roots such
that $z(\hat{\omega})=\hat{z}$, $\left|\hat{z}\right|<1$, and $\left|z(\omega)\right|\to\infty$
as $\left|\omega\right|\to\infty$. By continuity, we obtain the existence
of $\tilde{\omega}_{1,2}$ with $z(\tilde{\omega}_{1,2})=e^{i\tilde{\varphi}_{1,2}}$.
Hence, we have $i\tilde{\omega}_{1,2}\in\sigma\left(A+Be^{i\tilde{\varphi}_{1,2}}\right)$.
Since $\tilde{\omega}_{1}$ and $\tilde{\omega_{2}}$ belong to disjoint
intervals $(-\infty,\hat{\omega})$ and $(\hat{\omega},+\infty)$,
at least one of them is nonzero. 
\end{proof}
\begin{proof}[Proof of Theorems \ref{thm:AS-general} and \ref{thm:AS-general-1}.]

Firstly, we show that the conditions of Theorems \ref{thm:AS-general}
and \ref{thm:AS-general-1} are equivalent. 

1. The conditions (A2.2) {[}almost Hurwitz S($\Phi$){]} and (A1.2)
{[}nonsingular $S(0)${]} imply that $A_{0}$ is Hurwitz, i.e., (A1.1)
{[}instantaneous stability{]} holds. Assume the opposite, i.e., $A_{0}$
is not Hurwitz. 

When $i\omega_{0}\in\sigma(A_{0}),$ Lemma~\ref{lem:new-A0-iomega0}
implies that one of the following two cases occur:

I. There exists $\omega\ne0$ such that $i\omega\in\sigma(S(\Phi))$
for some $\Phi$. This contradicts the condition (A2.2) {[}almost
Hurwitz S($\Phi$){]}. 

II. $\omega_{0}=0$ and $0\in\sigma(S(\Phi))$ for all $\Phi\in\mathbb{T}^{m}$.
Substituting $\Phi=0$, we obtain $0\in\sigma(S(0))$, which contradicts
the condition (A1.2) {[}nonsingular $S(0)${]}. 

Now assume that $\sigma(A_{0})$ does not contain purely imaginary
eigenvalues. Since $A_{0}$ is not Hurwitz, we have $\lambda\in\sigma(A)$
with $\Re(\lambda)>0$. Applying Lemma~\eqref{lem:last} sequentially,
we obtain that there is $\lambda\in\sigma(S(\Phi))$ with some $\Phi$
with $\Re(\lambda)\ge0$ and $\lambda\ne0$. This contradicts to the
condition (A2.2) {[}almost Hurwitz S($\Phi$){]}. 

We have shown that $A_{0}$ is Hurwitz under the assumptions of Theorem~\ref{thm:AS-general-1}.
Let us show that (A2.2) {[}almost Hurwitz S($\Phi$){]} and (A1.3)
{[}no resonance{]} are equivalent when $A_{0}$ is Hurwitz. Applying
Lemma~\ref{lem:new-lemma-many}, one can see that cases I and III
of Lemma~\ref{lem:new-lemma-many} correspond to the condition (A2.2)
{[}almost Hurwitz S($\Phi$){]} of Theorem~\ref{thm:AS-general-1}.
Moreover, the condition (A1.3) {[}no resonance{]} of Theorem~\ref{thm:AS-general}
excludes the case II of Lemma~\ref{lem:new-lemma-many}, hence, it
is also equivalent to the case I or III of Lemma~\ref{lem:new-lemma-many}.
Hence, (A1.3) and (A2.2) are equivalent. 

The following steps (ii)--(iii) prove that (A1.2) {[}nonsingular
$S(0)${]} and (A2.2) {[}almost Hurwitz S($\Phi$){]} are sufficient
for the absolute stability.

2. First notice, that (A2.2) {[}almost Hurwitz S($\Phi$){]} implies
that $S(0)$ is almost Hurwitz, i.e., $S(0)$ is Hurwitz, except for
a possible zero eigenvalue. However, zero eigenvalue is excluded by
the condition (A1.2) {[}nonsingular $S(0)${]}. Hence, $S(0)$ is
Hurwitz. 

The spectrum for $\tau_{k}=0$, $k=1,\dots,m$ coincides with the
spectrum of $S(0)$, which is Hurwitz. Hence, all roots for $\tau_{k}=0$
possess negative real parts. The same also holds for sufficiently
small delays, see e.g. \cite{Smith2010}.

3. Due to continuity of the roots $\lambda$ with respect to $\tau_{k}>0$,
the only possible stability loss for positive delays is through the
crossing of the imaginary axis. Let us assume that $\lambda=i\omega^{*}$
at some $\tau_{k}=\tau_{k}^{*}>0$, $k=1,\dots,m$, and subsequently
show that it leads to a contradiction. Indeed $\lambda=i\omega^{*}$
implies 
\[
i\omega^{*}\in\sigma\left(S(\Phi^{*})\right),\quad\varphi_{k}^{*}=-\omega^{*}\tau_{k}^{*}.
\]
Due to (A2.2) {[}almost Hurwitz S($\Phi$){]}, it holds $\omega^{*}=0$.
However, in this case, $0\in\sigma\left(S(0)\right)$, which contradicts
the assumption (A1.2) {[}nonsingular $S(0)${]}.

Hence, for all positive delays, the roots cannot cross the imaginary
axis and the asymptotic stability holds, i.e. the conditions (A1.2)
and (A2.2) imply the absolute stability.

The following steps (iv)-(vi) prove that (A1.1) {[}instantaneous stability{]},
(A1.2) {[}nonsingular $S(0)${]}, and (A1.3) {[}no resonance{]} are
necessary conditions for the absolute stability. We choose here the
conditions (A1.1), (A1.2), (A1.3) from Theorem~\ref{thm:AS-general},
since they are equivalent to (A1.2) and (A2.2), and they are more
convenient for the proof of necessity. Hence, we assume that absolute
stability holds and show (A1.1), (A1.2), and (A1.3).

(iv) Assume (A1.1) {[}instantaneous stability{]} does not hold, then
there exists $\lambda_{0}\in\sigma(A_{0})$ with $\Re\left(\lambda_{0}\right)\ge0$.

If $\Re\left(\lambda_{0}\right)>0$, consider the case of large delays
$\tau_{k}=\varepsilon^{-1}$. The corresponding characteristic equation
has the form
\begin{equation}
P_{m}(\lambda)=\det\left[-\lambda\text{I}+A_{0}+\sum_{k=1}^{m}A_{k}e^{-\lambda/\varepsilon}\right]=0.\label{eq:cheqhi-1}
\end{equation}
Let $U(\lambda_{0})\subset\mathbb{C}$ be a sufficiently small open
neighborhood of $\lambda_{0}$ such that it does not contain other
eigenvalues of $A_{0}$, and $\Re(U(\lambda_{0}))>0$. Then, the holomorphic
function $P_{m}(\lambda)$ converges uniformly to $\det\left[-\lambda\text{I}+A_{0}\right]$
for $\varepsilon\to0$ on $U(\lambda_{0})$. According to the Hurwitz
theorem, the characteristic equation (\ref{eq:cheqhi-1}) has an unstable
root in $\lambda\in U(\lambda_{0})$ for all sufficiently small $\varepsilon$.
This contradicts to the absolute stability assumption and, hence,
(A1.1) is a necessary condition.

If $i\omega_{0}\in\sigma\left(A_{0}\right)$, Lemma~\ref{lem:new-A0-iomega0}
implies that one of the two cases can occur: 

1. $i\tilde{\omega}\in\sigma\left(S(\tilde{\Phi})\right)$ with some
$\tilde{\omega}\ne0$. In such a case, Lemma~\ref{lem:new-hierar-lemma}
implies that $i\tilde{\omega}$ is a solution of the characteristic
equation for countable number of delays (\ref{eq:delays}). We obtain
the contradiction to the absolute stability and, hence, (A1.1) is
necessary.

2. $\omega_{0}=0$ and $0\in\sigma\left(S(\Phi)\right)$ for all $\Phi\in\mathbb{T}^{m}$.
In particular, it holds $0\in\sigma\left(S(0)\right)$, which means
that $\lambda=0$ is an eigenvalue for arbitrary time-delays. This
contradicts the absolute stability assumption, hence, (A1.1) is necessary.

(v) The necessity of (A1.2) {[}nonsingular $S(0)${]} is evident,
since otherwise there exists a root $\lambda=0$ for all delays.

(vi) We show that (A1.3) {[}no resonance{]} is necessary. Assume the
opposite, i.e., $i\omega_{0}\in\sigma(S(\Phi))$, $\omega_{0}\ne0$
for some $\Phi$. Then, accordingly to Lemma~\ref{lem:new-hierar-lemma},
systems with time-delays (\ref{eq:delays}) possess the eigenvalues
$i\omega_{0}$. This contradicts the absolute stability and proves
that (A1.3) is necessary. 

Finally, let us show the criterion for the absolute hyperbolicity
from Theorem~\ref{thm:AS-general}. We first prove that (A1.2) {[}nonsingular
$S(0)${]} and (A1.3) {[}no resonance{]} imply absolute hyperbolicity.
Assume the opposite, so that there exists a solution $\lambda=i\omega$
of Eq.~(\ref{eq:cheq-1}) for some time delays. Then, if $\omega=0$,
then we obtain the contradiction to (A1.2); if $\omega\ne0$, we obtain
the contradiction to (A1.3) with $\varphi_{k}=-\omega\tau_{k}$. The
backward statement ``absolute hyperbolicity'' $\Rightarrow$ (A1.2)
and (A1.3) is also straightforward. Assuming that (A1.2) or (A1.3)
does not hold, we obtain either $\lambda=0$ or $\lambda=i\omega\ne0$,
respectively. 
\end{proof}

\section{Conclusions}

\label{con}

The obtained conditions for absolute stability determine a class of
linear DDEs, which are asymptotically exponentially stable, independently
on time-delays. Such class of systems can be useful for applications,
where the robustness against time-delays is important. For nonlinear
systems, these conditions exclude the possibility of any bifurcations
at the corresponding equilibrium.

Bifurcations induced by varying time delay are also excluded in the
case of absolute hyperbolicity. Linear systems that do not belong
to one of these two classes have resonances, i.e. purely imaginary
eigenvalues, which occur for countably many resonant delay times in
each delayed argument, and are necessarily unstable for large delays.
Note that such systems may or may not become stable for certain ranges
of small delays. Even systems with strong instabilities for large
delay may become stable for small delay, but only if they have unstable
asymptotic continuous spectrum. This counter-intuitive conclusion
follows from absolute hyperbolicity, which we showed for strongly
unstable systems with stable asymptotic continuous spectrum. 

\textbf{Acknowledgment:} SY was supported by the German Science Foundation
(Deutsche Forschungsgemeinschaft, DFG) {[}project No. 11803875{]}.
TP was supported by a Newton Advanced Fellowsip of the Royal Society
NAF\textbackslash R1\textbackslash 180236, by Serrapilheira Institute
(Grant No. Serra-1709-16124), and FAPESP (grant 2013/07375-0). 

\bibliographystyle{unsrt}
\bibliography{Papers-paper-absolute}

\end{document}